	\theoremstyle{plain}
		\newtheorem{theorem}{Theorem}[section]
		\newtheorem{lemma}[theorem]{Lemma}
		\newtheorem{proposition}[theorem]{Proposition}
		\newtheorem{introtheorem}{Theorem}
	\theoremstyle{definition}
		\newtheorem{definition}[theorem]{Definition}
		\newtheorem*{conjecture}{Conjecture}
	\theoremstyle{remark}
		\newtheorem{remark}[theorem]{Remark}
		\newtheorem{example}[theorem]{Example}
\newcommand{\N}{\mathbb{N}}
\newcommand{\Z}{\mathbb{Z}}
\newcommand{\C}{\mathbb{C}}
\newcommand{\PP}{\mathbb{P}}
\newcommand{\GG}{\mathbb{G}}
\newcommand{\FF}{\mathbb{F}}
\title[Universal vector bundles, push-forward formul{\ae} and positivity]{Universal vector bundles, push-forward formul{\ae} and positivity of characteristic forms}
\author{Filippo Fagioli}
\address{Dipartimento di Matematica e Informatica \textquotedblleft{Ulisse Dini}\textquotedblright\\Università degli Studi di Firenze\\Viale Morgagni 67/a\\50134 Firenze, Italia}
\email{filippo.fagioli@unifi.it}
\thanks{The author is supported by the project PRIN2017 ``Real and Complex Manifolds: Topology, Geometry and holomorphic dynamics'' (code 2017JZ2SW5).}
\keywords{Chern--Weil forms, flag bundles, universal vector bundles, push-forward formul{\ae}.}
\subjclass[2020]{Primary: 32L05; Secondary: 14M15, 53C05, 57R20}
\date{\today}
\begin{document}

\begin{abstract}
	Given a Hermitian holomorphic vector bundle over a complex manifold, consider its flag bundles with the associated universal vector bundles endowed with the induced metrics.
	We prove that the universal formula for the push-forward of a polynomial in the Chern classes of all the possible universal vector bundles also holds pointwise at the level of Chern forms.
	A key step in our proof is the explicit computation, at a point of any flag bundle, of the Chern curvature of the universal vector bundles with the induced metrics.
	
	As an application, we provide an alternative version of the Jacobi--Trudi identity at the level of differential forms.
	We also show the positivity of a family of polynomials in the Chern forms of Griffiths semipositive vector bundles.
	This latter result partially confirms the Griffiths' conjecture on positive characteristic forms, which has raised considerable interest in recent years.
\end{abstract}

\maketitle

\section*{Introduction}
	Let $ X $ be a complex manifold of dimension $ n $ and let $ E \to X $ be a holomorphic vector bundle of rank $ r \ge 2 $.
	Fix a sequence $ \rho $ of dimensions
	$$ 0 = \rho_0 < \dots < \rho_l < \dots < \rho_{m} = r $$
	and consider the associated flag bundle $ \pi_{\rho} \colon \FF_{\rho}(E) \to X $, which naturally has a filtration of $ m+1 $ tautological vector bundles $ U_{\rho,l} \to \FF_{\rho}(E) $ of rank $ \rho_{l} $.
	Consider also all the possible universal quotient bundles that arise from this filtration.
	Both tautological bundles and their universal quotients are called \emph{universal vector bundles} over $ \FF_{\rho}(E) $.
	Let $ \mathcal{E}_1, \dots, \mathcal{E}_N $ be an enumeration of all the universal vector bundles and denote by $ r_1, \dots, r_N $ the corresponding ranks.
	Finally, for each $ j = 1, \dots, N $ consider the Chern classes $ c_{1}(\mathcal{E}_j) , \dots, c_{r_j}(\mathcal{E}_j) $ in the cohomology of $ \FF_{\rho}(E) $. 
	
	Now, given a homogeneous polynomial $ F $ in $ r_1 + \dots + r_N $ variables of degree $ d_\rho + k $, where $ d_\rho $ denote the relative dimension of the submersion $ \pi_{\rho} $, the push-forward 
	\[
	(\pi_{\rho})_{*} F \bigl( c_{\bullet}(\mathcal{E}_{1}),\dots,c_{\bullet}(\mathcal{E}_{N}) \bigr)
	\] 
	gives a characteristic class of $ E $ in the cohomology group $ H^{2k}(X) $.
	It is therefore natural to try to determine a formula to express this class as some polynomial formally evaluated in the Chern classes of $ E $.
	
	This classical problem has been considered and settled by several authors in different degree of generality, and there is a vast literature concerning push-forward formul{\ae} (which are also called \emph{Gysin formul{\ae}}, see \cite{Ful98}) for flag bundles.
	This paper essentially considers the formul{\ae} given in \cite{DP17} (see also \cite{Ilo78}), however we just remark that the approaches followed are various and, for instance, Gysin formul{\ae} for flag bundles are provided by:
	\begin{itemize}
		\item \cite{Qui69,Dam73,AC87} by using Grothendieck residues;
		\item \cite{JLP81} by using Schur functions;
		\item \cite{Bri96,PR97} by using symmetrizing operators;
		\item \cite{BS12,Tu17,Zie18} by using residues at infinity.
	\end{itemize}
	See also the books \cite{FP98,Man98}.
	We also mention an explicit Gysin formula for Grassmann bundles (associated to sequences of type $ (0,\rho_{1},r) $) provided in \cite{KT15}.
	
	\medskip
	Beside the cohomological situation, it is natural to investigate the analogue at the level of representatives as follows.
	Equip $ E $ with a smooth Hermitian metric $ h $.
	From the Chern curvature tensor $ \Theta(E,h) $, let us consider the corresponding Chern forms on $ X $ defined, for $ 0 \le s \le r $, as
	\[
	c_s(E,h) = \operatorname{tr}_{\operatorname{End}({\Lambda}^{s} E)}\left( {\bigwedge}^s \frac{i}{2\pi} \Theta(E,h) \right).
	\]
	By the Chern--Weil theory the form $ c_s(E,h) $ is $ d $-closed, real and represents the Chern class $ c_s(E) $ of the vector bundle $ E $.
	
	The universal vector bundles $ \mathcal{E}_j $'s considered before inherit, being sub-bundles of $ \pi_{\rho}^{*} E $ or quotients of them, natural Hermitian metrics $ H_j $'s.
	Thus, the classes $ c_{1}(\mathcal{E}_j) , \dots, c_{r_j}(\mathcal{E}_j) $ now have special representatives $ c_{1}(\mathcal{E}_j,H_j) , \dots, c_{r_j}(\mathcal{E}_j,H_j) $ given by the Chern forms of their induced Hermitian metrics. 
	
	Therefore, one can formally compute the homogeneous polynomial $ F $ given above in the $ c_{\bullet}(\mathcal{E}_j,H_j) $'s.
	The resulting closed $ (d_\rho + k, d_\rho + k) $-form on $ \FF_{\rho}(E) $ can be now pushed-forward on $ X $ through the projection $ \pi_{\rho} $ \textsl{via} integration along the fibers.
	It follows that the $(k,k)$-form
	\[
	(\pi_{\rho})_{*} F \bigl( c_{\bullet}(\mathcal{E}_{1},H_1),\dots,c_{\bullet}(\mathcal{E}_{N},H_N) \bigr)
	\]
	is a special representative for the class $ (\pi_{\rho})_{*} F \bigl( c_{\bullet}(\mathcal{E}_{1}),\dots,c_{\bullet}(\mathcal{E}_{N}) \bigr) = \Phi\bigl(c_\bullet(E)\bigr) $, where $ \Phi $ denotes the polynomial given by universal Gysin formul{\ae} in cohomology mentioned above.
	
	On the other hand, one can also formally evaluate $ \Phi $ in the Chern forms of $ (E,h) $.
	The $ (d_\rho + k, d_\rho + k) $-form $ \Phi\bigl(c_\bullet(E,h)\bigr) $ thus obtained is another special representative for the cohomology class $ \Phi\bigl(c_\bullet(E)\bigr) $.
	Consequently, the forms $ \Phi\bigl(c_\bullet(E,h)\bigr) $ and $ (\pi_{\rho})_{*} F \bigl( c_{\bullet}(\mathcal{E}_{1},H_1),\dots,c_{\bullet}(\mathcal{E}_{N},H_N) \bigr) $ differ, \textsl{a priori}, by an exact $2k$-form.
	
	The main result of this paper (see Theorem~\ref{thm: darondeau-pragacz general for differential forms}) states that this exact form is zero.
	\begin{introtheorem}\label{introthm:pushgeneral}
		We have the equality of differential forms
		\[
		(\pi_{\rho})_{*} F \bigl( c_{\bullet}(\mathcal{E}_{1},H_1),\dots,c_{\bullet}(\mathcal{E}_{N},H_N) \bigr) = \Phi\bigl(c_\bullet(E,h)\bigr) .
		\]
	\end{introtheorem}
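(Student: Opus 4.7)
The plan is to prove the identity of differential forms pointwise on $X$. Since both sides are canonically defined, we may fix $x_0 \in X$ and choose a holomorphic frame $(e_1,\ldots,e_r)$ of $E$ near $x_0$ that is unitary at $x_0$ with vanishing connection form at $x_0$ (a Hermitian normal frame); in this frame, $\Theta(E,h)(x_0)$ is a prescribed matrix of $(1,1)$-forms on $T_{x_0}X$.

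The first and main technical step is to compute the Chern curvature of each universal vector bundle $\mathcal{E}_j$ at an arbitrary point $(x_0,F) \in \FF_{\rho}(E)$, where $F$ is a flag in $E_{x_0}$ compatible with $\rho$. The idea is to introduce opposite-big-cell coordinates $z_{ab}$ on $\FF_{\rho}(E_{x_0})$ adapted to $F$ and use them, together with the pull-back frame, to build explicit local trivializations of the tautological bundles $U_{\rho,l}$ and their successive quotients. Computing the induced metrics $H_j$ and applying iteratively the standard sub/quotient Chern curvature formulas along the filtration $0 \subset U_{\rho,1} \subset \cdots \subset U_{\rho,m-1} \subset \pi_{\rho}^{*} E$ yields an explicit formula expressing $\Theta(\mathcal{E}_j,H_j)(x_0,F)$ as the sum of a \emph{horizontal} contribution coming from $\Theta(E,h)(x_0)$ and a \emph{vertical} contribution built from the differentials $dz_{ab}$, with the second fundamental forms serving as the bridge between the two.

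Substituting these curvature formulas into $F\bigl(c_{\bullet}(\mathcal{E}_1,H_1),\ldots,c_{\bullet}(\mathcal{E}_N,H_N)\bigr)$ and integrating along the fibers of $\pi_{\rho}$ then produces, at $x_0$, a polynomial expression $P\bigl(\Theta(E,h)(x_0)\bigr)$ in the entries of the curvature matrix. Because the whole construction is independent of the choice of unitary frame at $x_0$ and because integration over $\FF_{\rho}(E_{x_0})$ is $U(r)$-equivariant, $P$ is $U(r)$-invariant under conjugation, hence $GL(r,\C)$-invariant on Hermitian matrices of forms. By the first fundamental theorem of invariant theory, $P$ is a polynomial in the elementary Chern invariants, so there exists a universal polynomial $\Phi'$ with
\[
(\pi_{\rho})_{*} F\bigl(c_{\bullet}(\mathcal{E}_{1},H_1),\ldots,c_{\bullet}(\mathcal{E}_{N},H_N)\bigr)(x_0) = \Phi'\bigl(c_{\bullet}(E,h)\bigr)(x_0).
\]
Passing to cohomology and invoking the universal Gysin formula of \cite{DP17} uniquely identifies $\Phi' = \Phi$, which gives the desired form-level equality.

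The main obstacle is the explicit curvature computation at a point of $\FF_{\rho}(E)$: tracking the iterated sub/quotient structure of the $\mathcal{E}_j$'s inside $\pi_{\rho}^{*} E$ requires careful bookkeeping of the second fundamental forms across the entire filtration and across successive quotient operations. This is the heart of the argument, and once it is carried out, the invariant-theoretic reduction together with the appeal to the cohomological Gysin formula is essentially formal.
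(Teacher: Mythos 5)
Your proposal follows essentially the same route as the paper: an explicit pointwise curvature computation for all universal bundles via normal frames and second fundamental forms, a horizontal/vertical splitting showing that the fiber integral is a universal polynomial in the entries of $\Theta(E,h)$, and a final identification with the cohomological Gysin polynomial $\Phi$ (which the paper carries out concretely by testing the universal difference on split bundles $A^{\otimes m_1}\oplus\cdots\oplus A^{\otimes m_r}$ over projective manifolds, the step your phrase ``uniquely identifies'' leaves implicit). Your appeal to the first fundamental theorem of invariant theory is just a more explicit packaging of the frame-independence and $U(r)$-equivariance argument the paper uses, so the substance is identical.
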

	The major consequence of this result is that there is no longer a need to explicitly compute the integral along the fibers of $ \pi_{\rho} $ to find the resulting characteristic form on $ X $.
	Rather, it is sufficient to apply any of the Gysin formul{\ae} in cohomology mentioned above to determine the explicit expression of $ \Phi $.
	
	In the special case of the projective bundle $ \PP(E) $ of lines (resp. $ \PP(E^{\vee}) $ of hyperplanes) in $ E $, which, in our notation, corresponds to the sequence $ \rho $ of dimensions $ (0,1,r) $ (resp. $ (0,r-1,r) $), Theorem~\ref{introthm:pushgeneral} generalizes previous results of \cite{Mou04,Gul12,Div16}.
	
	More precisely, \cite[Proposition~6]{Mou04} proves, by explicit calculations, the push-forward identity in the case of a power of the first Chern form of the line bundle $ \mathcal{O}_{\PP(E)}(1) $ (resp. $ \mathcal{O}_{\PP(E^{\vee})}(1) $) equipped with the induced metric.
	In such case, the push-forward gives the (signed) \emph{Segre forms} of $ (E,h) $, which are special representatives for the Segre classes of $ E $.
	Alternative proofs of the push-forward formula for projective bundles are given in \cite[Proposition~3.1]{Gul12} (with an indirect approach) and in \cite[Proposition~1.1]{Div16} (by pointwise computations).
	
	Theorem~\ref{introthm:pushgeneral} also generalizes the recent result \cite[Main Theorem]{DF20}.
	Indeed, the latter considers only those $ F $ formally evaluated in the first Chern forms of the successive quotients of the tautological filtration over $ \FF_{\rho}(E) $, while Theorem~\ref{introthm:pushgeneral} allows to compute the push-forward of any polynomial in the Chern forms of all the possible universal vector bundles.
	See also \cite[Theorem~{A.2}]{Fin20}, which is a special case of \cite[Main Theorem]{DF20} (and, consequently, of Theorem~\ref{introthm:pushgeneral}), that provides a refinement of the Jacobi--Trudi identity on the level of differential forms.
	
	The key tool that allows us to prove Theorem~\ref{introthm:pushgeneral} is the explicit computation of the Chern curvature (at a point) of any universal vector bundle endowed with the metric induced by the given Hermitian metric $ h $ on $ E $.
	More precisely, chosen $ 0 \le \ell < l \le m $ and with respect to a natural local chart around a point of $ \FF_{\rho}(E) $, we prove that the entries of the Chern curvature of $ U_{\rho,{l}} / U_{\rho,{\ell}} $ with the induced metric are of type
	\[
	\Theta(E,h) - \textrm{vert}_{l} + \textrm{vert}_{\ell}
	\]
	where the vertical $ (1,1) $-forms $ \textrm{vert}_{l} $ and $ \textrm{vert}_{\ell} $ are such that
	\begin{center}
		$ \textrm{vert}_{l} = 0 \ $ if $ U_{\rho,l} = \pi_{\rho}^{*} E \quad $ and $ \quad \textrm{vert}_{\ell} = 0 \ $ if $ U_{\rho,{\ell}} = (0) $.
	\end{center}
	We prove these curvature formul{\ae} in Theorem~\ref{thm: curvature of natural bundles} below.
	
	Recall that \cite[Formula~(2.1)]{Mou04} provides, for $ \rho = (0,r-1,r) $, the curvature at a point of the universal quotient $ \pi_{\rho}^{*} E^{\vee} / \mathcal{O}_{\PP(E^{\vee})}(-1) $ of the projective bundle $ \PP(E^{\vee}) $, and \cite[Formula~{(4.9)}]{Dem88} establishes the curvature at a point of the successive quotients of the tautological filtration over $ \FF_{\rho}(E) $.
	
	\medskip
	The last part of the paper is devoted to two applications of Theorems~\ref{introthm:pushgeneral}.
	
	First, recall that to a given a partition $ \sigma = (\sigma_{1},\dots,\sigma_{k}) $ of a natural number $ k $ in $ k $ parts which are less or equal than $ r $, we associate the \emph{Schur polynomial} $ S_{\sigma} \in \Z[c_1,\ldots,c_r] $, of weighted degree $ 2k $, defined as $ S_{\sigma}(c_1,\ldots,c_r) := \det\left( c_{\sigma_i + j - i} \right)_{1 \le i,j \le r} $, where $ c_0 = 1 $ and $ c_s = 0 $ if $ s<0 $ or $ s>r $.
	See, for instance, \cite{Ful97} for the definition and the properties of Schur polynomials.
	
	In the first application (see Proposition~\ref{prop: schur forms as push-forwards}) we give an explicit formula which highlights how to obtain every \emph{Schur form} (\textsl{i.e.}, a Schur polynomial formally evaluated in the Chern forms) as a push-forward from the complete flag bundle, thus giving an alternative version of the Jacobi--Trudi identity for differential forms provided in \cite[Theorem~{A.2}]{Fin20}.
	
	Regarding the second application we further assume that the Hermitian holomorphic vector bundle $ (E,h) \to X $ is \emph{Griffiths semipositive} (resp. \emph{positive}).
	This means that for every $ x \in X $, $ v \in E_x $, $ \tau \in T_{X,x} $ the Chern curvature tensor of $ (E,h) $ satisfy
	\[
	h\big(\Theta(E,h)_x(\tau,\bar\tau) \cdot v,v \big) \ge 0
	\]
	(resp. $ > 0 $, and $ = 0 $ if and only if $ v $ or $ \tau $ is the zero vector).
	
	Griffiths (semi)positivity is just one of the possible positivity notions for holomorphic vector bundles of higher rank.
	We refer, for instance, to \cite{Gri69,DPS94,Laz04,Dem01,LSY13,Fin20} and the references therein for more details on the theory of positive vector bundles.
	We just recall that
	\begin{center}
		(dual) Nakano positivity $ \Rightarrow $ Griffiths positivity $ \Rightarrow $ ampleness,
	\end{center}
	and that
	\begin{center}
		(dual) Nakano semipositivity $ \Rightarrow $ Griffiths semipositivity $ \Rightarrow $ nefness.
	\end{center}
	
	Now, it is natural to expect that certain conditions of positivity on the vector bundle (as ampleness or Griffiths positivity) impose, in turn, the positivity of objects that derive from it (as characteristic classes or forms).
	A classical issue in this context, known in the literature as the {\textquotedblleft}Griffiths' conjecture on the positivity of Chern--Weil forms{\textquotedblright}, states (with a modification with respect to the original statement) the following
	\begin{conjecture}[\cite{Gri69}]
		Given any Griffiths semipositive Hermitian holomorphic vector bundle $ (E,h) \to X $ and a non-negative combination $ P $ of Schur polynomials, then the differential form $ P\big( c_{\bullet}(E,h) \big) $ obtained by formally evaluating $ P $ in the Chern forms of $ (E,h) $ is a positive differential form.
	\end{conjecture}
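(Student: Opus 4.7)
\bigskip

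\noindent\textbf{Proof proposal.}

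The plan is to leverage Theorem~A to reduce the conjecture to a positivity statement on a flag bundle, where the curvature formul\ae\ of Theorem~\ref{thm: curvature of natural bundles} are available. Since any non-negative combination $P$ of Schur polynomials is a finite sum with non-negative coefficients of individual $S_{\sigma}$'s, and since positivity of forms is preserved under non-negative linear combinations, it suffices to treat the case $P = S_{\sigma}$. Using the Jacobi--Trudi type identity (which, thanks to Theorem~A, holds already at the level of differential forms on the complete flag bundle $\pi\colon\FF(E)\to X$ associated to the sequence $(0,1,2,\dots,r)$), I would write
\[
S_{\sigma}\bigl(c_{\bullet}(E,h)\bigr) = \pi_{*}\Bigl( M_{\sigma}\bigl(c_{1}(L_{1},H_{1}),\dots,c_{1}(L_{r},H_{r})\bigr) \Bigr),
\]
where $L_{l}=U_{l}/U_{l-1}$ are the successive quotient line bundles of the tautological filtration, each endowed with the induced metric $H_{l}$, and $M_{\sigma}$ is the monomial (or symmetrization) expressing $S_{\sigma}$ in terms of Chern roots.

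The key step is then to prove that the integrand $M_{\sigma}(c_{1}(L_{1},H_{1}),\dots,c_{1}(L_{r},H_{r}))$ is a (strongly) positive differential form on $\FF(E)$. By Theorem~\ref{thm: curvature of natural bundles}, the curvature of each $L_{l}$ at a point has, in a suitable local frame, the shape
\[
\Theta(L_{l},H_{l}) = \Theta(E,h)_{ll} - \mathrm{vert}_{l} + \mathrm{vert}_{l-1},
\]
so the first Chern form $c_{1}(L_{l},H_{l})$ is the sum of a ``horizontal'' piece built from $\Theta(E,h)$ and a telescoping ``vertical'' correction. Griffiths semipositivity of $(E,h)$ ensures that the horizontal contributions are semipositive in the $v$-direction along the chosen flag, and one expects that after wedging and summing according to $M_{\sigma}$, the vertical contributions reassemble into a manifestly non-negative expression (this is how the Segre form positivity of Mourougane and the quotient-line results of Demailly and of \cite{DF20} fit into the picture).

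Once the integrand is shown to be weakly positive on $\FF(E)$, I would conclude by invoking the standard fact that integration along the fibers of a proper holomorphic submersion sends weakly positive $(p,p)$-forms to weakly positive $(p-d,p-d)$-forms on the base, yielding positivity of $S_{\sigma}(c_{\bullet}(E,h))$ on $X$. Summing over $\sigma$ with the non-negative coefficients of $P$ then finishes the proof.

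The hardest step, and the reason only a partial confirmation is realistic, is the positivity of the integrand on $\FF(E)$. The vertical corrections $\mathrm{vert}_{l}$ are \emph{not} individually semipositive, and a single line bundle $L_{l}$ need not have a semipositive first Chern form even when $(E,h)$ is Griffiths semipositive; so the argument cannot proceed factor by factor. One must exploit the specific algebraic shape of $M_{\sigma}$ (e.g.\ its symmetry under permutations of the Chern roots, the telescoping of consecutive $\mathrm{vert}_{l}$'s, or Schur-positive rewritings) to absorb the vertical terms. I expect this to succeed cleanly for the ``signed Segre'' shapes and for Schur polynomials $S_{\sigma}$ indexed by partitions of a restricted form (e.g.\ hooks, or partitions that fit a single column/row), which would already yield a non-trivial family of positive characteristic forms and match the ``partial confirmation'' announced in the abstract. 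Extending the argument to \emph{every} Schur-positive $P$ appears to require genuinely new combinatorial positivity input beyond Theorem~A, and is where the conjecture remains open.
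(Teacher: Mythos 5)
You have not proved the statement, and neither does the paper: this is precisely the Griffiths conjecture, which the paper records as an open conjecture and only \emph{partially} confirms (via Theorem~\ref{thm: cone of positive forms}). Your reduction to the positivity of the Jacobi--Trudi integrand $M_{\sigma}\bigl(c_1(L_1,H_1),\dots,c_1(L_r,H_r)\bigr)$ on the complete flag bundle is a legitimate and known strategy (it is essentially the route of \cite{DF20} and of \cite[Theorem~A.2]{Fin20}), but the pivotal step is exactly the one you flag as unresolved, and it genuinely fails factor by factor: by Theorem~\ref{thm: curvature of natural bundles} the curvature of $L_l=U_l/U_{l-1}$ contains the term $-\sum_{\lambda\le r-\rho_l} d\zeta_{\lambda\alpha}\wedge d\bar\zeta_{\lambda\alpha}$, so $c_1(L_l,H_l)$ is \emph{not} semipositive for $l<r$ even when $(E,h)$ is Griffiths semipositive, and no general mechanism is known by which the symmetrization $M_\sigma$ absorbs these negative vertical contributions for an arbitrary partition $\sigma$. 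A sketch whose decisive step is conceded to be open is not a proof; what you have written is a research program whose missing ingredient is the conjecture itself.

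It is also worth noting that the paper's actual partial result proceeds differently from your program. Rather than working on the complete flag bundle with the non-semipositive quotient line bundles $L_l$, Theorem~\ref{thm: cone of positive forms} pushes forward the forms $c_1(Q_s,h_s)^{\wedge\alpha}\wedge c_2(Q_s,h_s)^{\wedge\beta}$ from Grassmann bundles, where $Q_s=p^*E/\gamma_s$ is the universal \emph{quotient} bundle: being a quotient of a Griffiths semipositive bundle it is itself Griffiths semipositive, so $c_1(Q_s,h_s)$, $c_2(Q_s,h_s)$ and $c_2(Q_s,h_s)^{\wedge 2}$ are positive by known results, positivity descends through $p_*$ by Theorem~\ref{thm: darondeau-pragacz general for differential forms}, and membership in the Schur cone is then deduced indirectly from Fulton--Lazarsfeld rather than by exhibiting a positive integrand for each $S_\sigma$. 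If you want to salvage part of your argument, restrict to the sub-family of $\sigma$'s for which the integrand is actually known to be (strongly) positive --- this recovers the signed Segre forms and the cone of \cite{DF20} --- but do not present the general case as proved.
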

	Recall that a $ (k,k) $-form is \emph{positive} (following the terminology of \cite{Dem01}) if its restriction to every $ k $-dimensional complex submanifold gives a non-negative volume form.
	We refer to \cite[Section~{1.1}]{Fag20} for a summary on positivity notions for differential forms and their properties.
	For more details see \cite{HK74,Dem01}.
	
	Griffiths' conjecture can be thus interpreted as a differential, pointwise version of the celebrated Fulton--Lazarsfeld theorem \cite{FL83}, which characterizes precisely all numerically positive polynomials for ample vector bundles as the positive linear combinations of Schur polynomials, thus extending the previous works \cite{Kle69,BG71,Gie71,UT77}.
	See \cite[Theorem~2.5]{DPS94} for an extension of \cite[Theorem~3.1]{FL83} to nef vector bundles on compact K\"ahler manifolds.
	
	As a cohomology class represented by a positive differential form is numerically positive, we point out that an answer in the affirmative to Griffiths' conjecture would give a stronger (w.r.t. \cite{FL83}) statement under the stronger (but conjecturally equivalent, see \cite{Gri69}) hypothesis of Griffiths positivity.
	
	We refer to \cite{Fag20} for a detailed exposition regarding the state of the art of Griffiths' conjecture.
	Here we recall that, beside the trivial case of the first Chern form, Griffiths proved in \cite[Appendix to~\S 5.(b)]{Gri69} that the second Chern form of a rank $ 2 $ Griffiths positive vector bundle is positive and that Guler proved in \cite[Theorem~1.1]{Gul12} the positivity of the already mentioned signed Segre forms.
	More recently, it has been shown in \cite[Main Application]{DF20} the \emph{strong} positivity of the differential forms belonging to a certain sub-cone of the positive convex cone spanned by Schur forms.
	This sub-cone includes, for instance, all the signed Segre forms.
	Moreover, \cite[Theorem~{2.2}]{Fag20} points out how to prove the positivity of the second Chern form in any rank and in the semipositive context and \cite[Theorem~{0.1}]{Fag20} proves the positivity of the characteristic form $ (c_1 \wedge c_2 - c_3)(E,h) $ in rank $ 3 $.
	There are also several other related works concerning Griffiths' conjecture on positive forms, some of which are very recent, see for instance \cite{Div16,Pin18,Li20,RT21,Xia20,Fin20}.
	
	In this paper, as the second application of Theorem~\ref{introthm:pushgeneral}, and in the same spirit of \cite{Gul12,DF20}, we confirm Griffiths' conjecture for another family of characteristic forms.
	They arise by pushing-forward positive forms on the Grassmann bundles associated to the given Griffiths semipositive vector bundle, see Theorem~\ref{thm: cone of positive forms} below.
	The major difference with respect to \cite[Main~Application]{DF20} is that we need the push-forward formula for characteristic forms of flag bundles in the full generality of Theorem~\ref{introthm:pushgeneral} (indeed, Theorem~\ref{thm: cone of positive forms} does not follow from \cite[Main~Theorem]{DF20}).
	However, we can not conclude, \textsl{a priori}, that the forms obtained in Theorem~\ref{thm: cone of positive forms} are strongly positive (cf. \cite[Remark~{4.9}]{DF20}) as (squares of) $ (2,2) $-forms are involved.
	As a consequence, we also provide some explicit examples of characteristic forms whose positivity was not previously known.
	
	Finally, we want to highlight the straightforward fact that, in this context, whenever we get the positivity of a characteristic form, it gives us a pointwise inequality between products of Chern forms valid for Griffiths (semi)positive bundles over complex manifolds.
	Moreover, if the manifold is compact and of appropriate dimension, we have accordingly an inequality of Chern numbers, once we have integrated the characteristic form on the manifold (cf. \cite[Theorem~{3.2}]{Li20} and \cite[Corollary~{2.5}]{Fag20}).
	
\subsubsection*{Acknowledgments}
	Some of the results in this paper are obtained in the author's PhD thesis \cite{Fag22} defended on 12 May 2022 at Università degli Studi di Roma ``La Sapienza'' and supervised by Simone Diverio, whom the author warmly thanks for the valuable discussions and constant support.

\subsubsection*{Notation}
	For $ 0 \le p,q \le n $, $ \mathcal{A}^{p,q}(X,E) $ stands for the space $ C^{\infty}\bigl(X,\Lambda^{p,q}T_X^{\vee}\otimes E\bigr) $ of differential $ (p,q) $-forms on $ X $ with values in $ E $.
	In particular, $ \mathcal{A}^{p,q}(X) $ denotes the space $ C^{\infty}\bigl(X,\Lambda^{p,q}T_X^{\vee}\bigr) $ of differential $ (p,q) $-forms on $ X $.
	Similarly, $ \mathcal{A}^{k}(X) $ stands for the space of differential $ k $-forms on $ X $.
	
	We use the same notation $ \pi_{*} $ for both the {push-forward} of differential forms, which is given by integration along the fibers of a proper holomorphic submersion $ \pi $ of complex manifolds, and the {push-forward} induced in cohomology.

\section{Flag bundles}\label{sect: flag bundles introduction}
Let $ X $ be a complex manifold of dimension $ n $ and let $ E \to X $ be a holomorphic vector bundle of rank $ r $.
Given a natural number $ m > 0 $, fix a sequence of integers $ \rho=(\rho_0,\dots,\rho_m) $ of the form
\begin{equation*}
	0 = \rho_0 < \rho_1 < \dots < \rho_l < \dots < \rho_{m-1} < \rho_m = r .
\end{equation*}
We call such a $ \rho $ a \emph{dimension sequence}.

The \emph{flag bundle of type $ \rho $ associated to $ E $} is the holomorphic fiber bundle
\[
\pi_{\rho} \colon \FF_{\rho}(E) \to X
\]
where the fiber over $ x \in X $ is the flag manifold $ \FF_{\rho}(E_x) $, whose points $ \mathbf{f}_{x,\rho} $ are the flags of the form
\[
\{ 0_{x} \} = V_{x,\rho_0} \subset V_{x,\rho_1} \subset \dots \subset V_{x,\rho_l} \subset \dots \subset V_{x,\rho_{m-1}} \subset V_{x,\rho_m} = E_{x}
\]
where, for each $ 0 \le l \le m $, $ \dim_{\C} V_{x,\rho_l} = \rho_l $.

Over $ \FF_{\rho}(E) $ we have a tautological filtration
\begin{equation}\label{eq: tautological filtration vector bundles over flag bundle}
	(0) = U_{\rho,0} \subset U_{\rho,1} \subset \dots \subset U_{\rho,l} \subset \dots \subset U_{\rho,m-1} \subset U_{\rho,m} = \pi_{\rho}^{*}E
\end{equation}
of vector sub-bundles of $ \pi_{\rho}^{*}E $, where, for every $ 0 \le l \le m $, the fiber of $ U_{\rho,l} $ over the point $ (x,\mathbf{f}_{x,\rho}) \in \FF_{\rho}(E) $ is $ V_{x,\rho_l} $.
Therefore, the vector bundle $ U_{\rho,l} \to \FF_{\rho}(E) $ has rank $ \rho_l $.
Finally, denote by $ d_\rho $ the relative dimension of the proper holomorphic submersion $ \pi_{\rho} $.

\begin{example}
	If $ \rho $ is the complete sequence $ (0,1,\dots,r-1,r) $, then $ \FF_{\rho}(E) $ is the \emph{complete} flag bundle associated to $ E $.
	Observe that this occurs when $ m = r $.
	In this case, we shall drop the subscript $ \rho $ simply writing
	\[
	\pi \colon \FF(E) \to X
	\]
	and denoting by $ d $ the relative dimension of $ \pi $.
	Accordingly, for $ 0 \le l \le r $, the tautological filtration~\eqref{eq: tautological filtration vector bundles over flag bundle} is written as
	\begin{equation*}
		(0) = U_{0} \subset U_{1} \subset \dots \subset U_{l} \subset \dots \subset U_{m-1} \subset U_{r} = \pi^{*}E .
	\end{equation*}
\end{example}

\subsection{Universal vector bundles}
The tautological filtration~\eqref{eq: tautological filtration vector bundles over flag bundle} gives $ \binom{m+1}{2} $ \emph{universal} vector bundles over $ \FF_{\rho}(E) $, which are of the form
\[
^{\textstyle U_{\rho,{l}}}\Big/_{\textstyle U_{\rho,{\ell}}} \to \FF_{\rho}(E), \quad 0 \le \ell < l \le m .
\]
In other words, these are the tautological vector bundles already introduced (we are not considering here the vector bundle of rank $ 0 $) and all their possible quotients.

\begin{example}\label{ex: projectivized}
	Set $ m = 2 $.
	If $ \rho $ is the sequence $ (0,1,r) $ then $ \FF_{\rho}(E) $ equals the projective bundle of lines in $ E $, which we denote by $ \PP(E) $.
	In this case, the tautological filtration~\eqref{eq: tautological filtration vector bundles over flag bundle} consists on one proper sub-bundle only, namely $ U_{(0,1,r),1} $, which equals, by definition, the tautological line bundle $ \mathcal{O}_{\PP(E)}(-1) $.
	Given that $ U_{\rho,2} / U_{\rho,1} $ is the only quotient given by the filtration~\eqref{eq: tautological filtration vector bundles over flag bundle}, we have that $ \PP(E) $ has three universal vector bundles.
	They form the tautological short exact sequence
	\begin{equation}\label{eq: tautological sequence of projective bundle}
		0 \to \underbrace{\mathcal{O}_{\PP(E)}(-1)}_{U_{\rho,1}} \hookrightarrow \underbrace{\pi_{\rho}^{*} E}_{U_{\rho,2}} \twoheadrightarrow \underbrace{\pi_{\rho}^{*}E/ \mathcal{O}_{\PP(E)}(-1)}_{U_{\rho,2} / U_{\rho,1}} \to 0
	\end{equation}
	over the projective bundle $ \PP(E) $.
\end{example}

\begin{remark}\label{rem: notation for metrics}
	Suppose that $ E $ is equipped with a Hermitian metric $ h $.
	The pull-back metric $ \pi_{\rho}^{*}h $ on $ \pi_{\rho}^{*}E $ endows the vector bundle $ U_{\rho,l} $ with the restriction metric $ h_{\rho,l} := \left. \pi_{\rho}^{*} h \right|_{U_{\rho,l}} $.
	Consequently, the quotient bundle $ U_{\rho,l}/U_{\rho,\ell} $ is equipped with the Hermitian metric given by the quotient of the metrics $ h_{\rho,l} $ and $ h_{\rho,\ell} $.
	When needed, we denote such a Hermitian metric by $ h_{\rho,(l,\ell)} $.
\end{remark}

\begin{definition}
	Let $ \rho = (\rho_0,\dots,\rho_m) $ and $ \tau = (\tau_0,\dots,\tau_{\tilde{m}}) $ be two dimension sequences.
	We say that $ \rho $ is \emph{greater than or equal to} $ \tau $, and we write $ \rho \ge \tau $, if:
	\begin{enumerate}[(a)]
		\item $ m \ge \tilde{m} $;\label{conditionpt:minore}
		\item for each $ \tilde{l} \in \{ 0,\dots, \tilde{m} \} $ there is a index $ l \in \{0,\dots, m\} $ for which $ \rho_l = \tau_{\tilde{l}} $.\label{conditionpt:sequenzasuriettiva}
	\end{enumerate}
	In addition, if
	\begin{enumerate}[(c)]
		\item $ \rho \neq \tau $;
	\end{enumerate}
	we write $ \rho > \tau $.
\end{definition}

Given $ \rho > \tau $, it is useful to denote by
\[
\pi_{\tau}^{\rho} \colon \FF_{\rho}(E) \to \FF_{\tau}(E)
\]
the natural forgetful projection between flag bundles.
If $ \rho = \tau $, then $ \pi_{\rho}^{\rho} $ is simply the identity map of $ \FF_{\rho}(E) $.

By construction, $ \pi_{\tau} \circ \pi_{\tau}^{\rho} = \pi_{\rho} $, \textsl{i.e.}, the diagram
\begin{equation}\label{eq: commutative diagram of incomplete flags over the manifold}
	\begin{tikzcd}
		\FF_{\rho}(E) \arrow[dr, "\pi_{\rho}"'] \arrow[rr," \pi_{\tau}^{\rho}"]{}
		& & \FF_{\tau}(E) \arrow[dl,"\pi_{\tau}"] \\
		& X
	\end{tikzcd}
\end{equation}
is commutative.

Moreover, if $ \rho \ge \tau \ge \sigma $ then $ \pi_{\sigma}^{\tau} \circ \pi_{\tau}^{\rho} = \pi_{\sigma}^{\rho} $, \textsl{i.e.}, there is a commutative diagram
\begin{equation*}
	\begin{tikzcd}
		\FF_{\rho}(E) \arrow[dr, "\pi_{\sigma}^{\rho}"'] \arrow[rr," \pi_{\tau}^{\rho}"]{}
		& & \FF_{\tau}(E) \arrow[dl,"\pi_{\sigma}^{\tau}"] \\
		& \FF_{\sigma}(E)
	\end{tikzcd}
\end{equation*}
of projections between flag bundles over $ X $.

\begin{remark}\label{rem: universal bundles are pull-backs over projections}
	Given the projection $ \pi_{\tau}^{\rho} \colon \FF_{\rho}(E) \to \FF_{\tau}(E) $, suppose that there is a index $ \tilde{l} $ such that $ \rho_l = \tau_{\tilde{l}} $.
	Then, it is straightforward to note that the tautological vector bundle $ U_{\rho,l} \to \FF_{\rho}(E)$ is the pull-back through the map $ \pi_{\tau}^{\rho} $ of the bundle $ U_{\tau,\tilde{l}} \to \FF_{\tau}(E) $.
	Moreover, it is clear that the metric $ h_{\rho,l} $ coincides with the pull-back metric $ (\pi_{\tau}^{\rho})^{*} (h_{\tau,\tilde{l}}) $.
	Indeed,
	\begin{equation*}
		\begin{split}
			(\pi_{\tau}^{\rho})^{*} (h_{\tau,\tilde{l}}) =
			(\pi_{\tau}^{\rho})^{*} \left( { {\pi_{\tau}^{*} h}|_{U_{\tau,\tilde{l}}} } \right) =
			(\pi_{\tau}^{\rho})^{*} {\pi_{\tau}^{*} h}|_{(  \pi_{\tau}^{\rho} )^{*} U_{\tau,\tilde{l}}} =
			{ \pi_{\rho}^{*} h}|_{ U_{\rho,{l}}} =
			h_{\rho,l} .
		\end{split}
	\end{equation*}
	In addition, for $ 0 < \ell < l $, if there is a $ \tilde{\ell} $ such that $ \rho_{\ell} = \tau_{\tilde{\ell}} $, then $ U_{\rho,l}/U_{\rho,\ell} \to \FF_{\rho}(E) $ is the pull-back through $ \pi_{\tau}^{\rho} $ of the quotient $ U_{\tau,\tilde{l}}/U_{\rho,\tilde{\ell}} \to \FF_{\tau}(E) $.
	In this case, the Hermitian metric $ h_{\rho,(l,\ell)} $ is the pull-back through $ \pi_{\tau}^{\rho} $ of the metric $ h_{\tau,(\tilde{l},\tilde{\ell})} $.
\end{remark}

In particular, we emphasize the following.
\begin{remark}\label{rem: reduces to grassmannian}
	Choose $ m \ge 2 $, and let $ \rho $ be a dimension sequence.
	Fix an index $ l = 1,\dots,m-1 $ and set $ s := \rho_l $.
	Since $ \rho \ge (0, s ,r) $, we have of course the forgetful projection
	\[
	\pi_{s}^{\rho} \colon \FF_{\rho}(E) \to \GG_{s}(E)
	\]
	onto the Grassmann bundle $ \GG_{s}(E) $ of $ s $-planes in $ E $.
	
	By Remark~\ref{rem: universal bundles are pull-backs over projections}, $ U_{\rho,l} $ is the pull-back through $ \pi_{s}^{\rho} $ of the tautological $ s $-plane bundle over $ \GG_{s}(E) $ denoted by $ \gamma_{s} $.
	Moreover, the Hermitian metric $ h_{\rho,l} $ is the pull-back of the obvious restriction metric $ h_s $ on $ \gamma_{s} $ induced by $ h $.
	Similarly, $ (\pi_{\rho}^{*}E / U_{\rho,l},h_{\rho,(m,l)}) $ is the pull-back through $ \pi_{s}^{\rho} $ of the universal quotient bundle over $ \GG_{s}(E) $, equipped with the quotient metric.
	
	Finally, if $ m > 2 $ and $ 0 < \ell < l < m $, let $ \mathbf{s} $ be the sequence $ (0,s_1,s_2,r) $ with $ s_1 = \rho_{\ell} $, $ s_2 = \rho_l $.
	Given that $ \rho \ge \mathbf{s} $, we have that $ U_{\rho,l}/U_{\rho,\ell} $ is the pull-back through the map $ \pi_{\mathbf{s}}^{\rho} $ of the unique quotient $ U_{\mathbf{s},2} / U_{\mathbf{s},1} $ of proper sub-bundles over the flag bundle $ \FF_{\mathbf{s}}(E) $.
	As always, this vector bundle is equipped with the obvious quotient metric $ h_{\mathbf{s},(2,1)} $ introduced above, and $ (\pi_{\mathbf{s}}^{\rho})^{*} h_{\mathbf{s},(2,1)} = h_{\rho,(l,\ell)} $.
\end{remark}

The observations in Remark~\ref{rem: reduces to grassmannian} are the starting point for the proof of Theorem~\ref{thm: curvature of natural bundles}.

\subsection{Local frames of universal vector bundles}\label{sect: coordinates flag and frames universal bundles}
Fix a point $ x_0 \in X $ and local holomorphic coordinates $ z = (z_1,\dots,z_n) $ on an open set of $ X $ centered at $ x_0 $.
Recall that a local holomorphic frame $ (e_1,\dots,e_r) $ of $ E $ centered at $ x_0 $ is called a \emph{normal coordinate frame} at $ x_0 $ (see,~\cite[(12.10)~Proposition]{Dem01}) if
\begin{equation*}
	\langle e_{\alpha}(z), e_{\beta}(z) \rangle_h = \delta_{\alpha\beta} - \sum_{1 \le j,k \le n} c_{jk\alpha\beta} \, z_j \bar{z}_k + \operatorname{O}(|z|^3) ,
\end{equation*}
where the $ c_{jk\alpha\beta} $'s are the coefficients of the Chern curvature tensor
\begin{equation*}
	\Theta(E,h)_{x_0} = \sum_{1 \le \alpha,\beta \le r} \sum_{1 \le j,k \le n} c_{jk\alpha\beta} \, dz_j \wedge d\bar{z}_k \otimes e_{\alpha}^{\vee} \otimes e_{\beta}
\end{equation*}
expressed with respect to the frame $ (e_1,\dots,e_r) $.

Since $ i \Theta(E,h) $ is a $ (1,1) $-form with values in the bundle $ \operatorname{Herm}(E,h) $ of Hermitian endomorphisms of $ E $, we have the symmetry relation $ \bar{c}_{jk\alpha\beta} = c_{kj\beta\alpha} $.

In this notation, the $ (\beta,\alpha) $-entry of the matrix associated to $ \Theta(E,h)_{x_0} $ with respect to $ (e_1,\dots,e_r) $ in the coordinate open set considered is the $ (1,1) $-form
\begin{equation*}
	\Theta_{\beta\alpha} := \sum_{1 \le j,k \le n} c_{jk\alpha\beta} \, dz_j \wedge d\bar{z}_k.
\end{equation*}

Now, given a dimension sequence $ \rho $, fix a flag $ \mathbf{f}_0 \in \FF_{\rho}(E_{x_0}) $.
Let $ (e_1,\dots,e_r) $ be a local normal frame of $ E $ at $ x_0 $.
As the action of the general linear group is transitive on $ \FF_{\rho}(E_{x_0}) $, we assume that the flag
\begin{equation*}
	\{ 0_{x_0} \} \subset \dots \subset \operatorname{Span}\bigl\{ e_{r-\rho_l+1}(x_0),e_{r-\rho_l+2}(x_0),\dots,e_{r}(x_0) \bigr\} \subset \dots \subset E_{x_0}
\end{equation*}
coincides with $ \mathbf{f}_0 $.

The basis $ \bigl( e_1(z), \dots, e_r(z) \bigr)$ gives affine coordinates $ \zeta = (\zeta_{\lambda\mu}) $ on the fiber $ \FF_{\rho}(E_z) $ where the indices $ \lambda $ and $ \mu $ satisfy the condition
\begin{equation*}\label{eq:asterisk}
	\text{ there exists } \ell = 1, \dots, m-1 \text{ for which } 1 \le \lambda \le r - \rho_{m-\ell} < \mu \le r . \tag{$*$}
\end{equation*}
Such coordinates parameterize flags of $ E_{z} $ of type
\[
\{ 0_{z} \} \subset \dots \subset V_{z,\rho_{l}} := \operatorname{Span} \bigl\{\epsilon_{r-\rho_l+1}(z,\zeta),\epsilon_{r-\rho_l+2}(z,\zeta),\dots,\epsilon_{r}(z,\zeta) \bigr\} \subset \dots \subset E_{z}
\]
where, for $ 1 \le \alpha \le r $,
\begin{equation*}
	\epsilon_{\alpha}(z,\zeta)=e_{\alpha}(z) + \sum \zeta_{\lambda \alpha}\, e_\lambda(z)
\end{equation*}
and the summation is taken over all $ 1 \le \lambda < \alpha $ such that, as before, the indices $ \lambda $ and $ \alpha $ satisfy Condition~\eqref{eq:asterisk}.
Summing up, we have constructed in this way local holomorphic coordinates $ (z,\zeta) = \left( z_1,\dots,z_n, \zeta_{\lambda\mu} \right) $ on $ \FF_{\rho}(E) $ centered at $ (x_0,\mathbf{f}_0) $.

By construction, if $ 0 < l < m $, the local sections
\begin{equation}\label{eq: frame of subbundles}
	\epsilon_\alpha (z,\zeta) , \quad r-\rho_{l} < \alpha \le r ,
\end{equation}
form a local holomorphic frame of $ U_{\rho,l} $.
Moreover, a local holomorphic frame for $ \pi_{\rho}^{*}E/U_{\rho,l} $ is given by the sections
\begin{equation}\label{eq: frame of quotients}
	\tilde{e}_\alpha (z,\zeta) = \text{image of $ e_\alpha(z) $ in $ ^{\textstyle E_{z}}\Big/_{\textstyle V_{z,\rho_l}} $} , \quad 1 \le \alpha \le r-\rho_{l} .
\end{equation}
Similarly, if $ 0 < \ell < l < m $, the sections
\begin{equation}\label{eq: frame of proper quotients}
	\tilde{\epsilon}_\alpha (z,\zeta) = \text{image of $ \epsilon_\alpha(z,\zeta) $ in $ ^{\textstyle V_{z,\rho_l}}\Big/_{\textstyle V_{z,\rho_{\ell}}} $} , \quad r-\rho_{l} < \alpha \le r-\rho_{\ell} ,
\end{equation}
form a local holomorphic frame for $ U_{\rho,l}/U_{\rho,\ell} $.

In Section~\ref{sect: curvature universal bundles} we use the local frames~\eqref{eq: frame of subbundles},~\eqref{eq: frame of quotients},~\eqref{eq: frame of proper quotients} to compute the Chern curvature tensors at a point of all the $ \binom{m+1}{2} $ universal vector bundles of $ \FF_{\rho}(E) $.

\section{Curvature of universal vector bundles}\label{sect: curvature universal bundles}
Let $ \rho = (\rho_{0},\dots,\rho_{m}) $ be a dimension sequence and $ \pi_{\rho} \colon \FF_{\rho}(E) \to X $ the associated flag bundle.
In the notation of Section~\ref{sect: flag bundles introduction}, we now compute the Chern curvature tensor of all the universal vector bundles of $ \FF_{\rho}(E) $ endowed with the natural Hermitian metrics induced (see Remark~\ref{rem: notation for metrics}).

\begin{theorem}\label{thm: curvature of natural bundles}
	Choose a point $ (x_0,\mathbf{f}_0) \in \FF_{\rho}(E) $ and let $ (e_1,\dots,e_r) $ be a local normal frame of $ (E,h) $ at $ x_0 $ which identifies the flag $ \mathbf{f}_0 $.
	Fix the local holomorphic coordinates $ (z,\zeta) $ centered at $ (x_0,\mathbf{f}_0) $ introduced in Section~\ref{sect: coordinates flag and frames universal bundles}.
	
	Then, for any $ 0 \le \ell < l \le m $, the curvature $ \Theta(U_{\rho,l}/U_{\rho,\ell},h_{\rho,(l,\ell)})_{(x_0,\mathbf{f}_0)} $ equals
		\begin{equation*}
			\sum_{r-\rho_l < \alpha,\beta \le r-\rho_{\ell}}
			\left(
			\Theta_{\beta\alpha}
			- \sum_{1 \le \lambda \le r-\rho_l} d\zeta_{\lambda\alpha} \wedge d\bar{\zeta}_{\lambda\beta}
			+ \sum_{r-\rho_{\ell} < \mu \le r} d{\zeta}_{\beta\mu} \wedge d\bar{\zeta}_{\alpha\mu}
			\right) \otimes {e_\alpha^\vee} \otimes {e_\beta} .
		\end{equation*}

\end{theorem}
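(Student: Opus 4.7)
The plan is to perform a direct, pointwise local computation at $(x_0,\mathbf{f}_0)$ in the frame~\eqref{eq: frame of proper quotients}. By Remark~\ref{rem: reduces to grassmannian} and the pullback-compatibility of the Chern curvature, one could reduce to the case where $\rho$ has at most three non-trivial levels, but the general computation is no harder and I would proceed directly. The strategy has two stages: first, I would expand the induced quotient metric matrix to second order in $(z,\zeta)$; second, I would apply the Chern curvature formula at a point where the metric matrix is the identity but the frame is \emph{not} normal.

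For the first stage, using $\epsilon_\alpha = e_\alpha + \sum_\lambda \zeta_{\lambda\alpha}e_\lambda$ together with the normal-frame expansion $\langle e_\alpha, e_\beta\rangle_h = \delta_{\alpha\beta} - \sum c_{jk\alpha\beta}z_j\bar z_k + O(|z|^3)$, I would first compute the metric $H^{U_{\rho,l}}_{\alpha\beta}=\langle\epsilon_\alpha,\epsilon_\beta\rangle$ on $U_{\rho,l}$ to second order. Then, partitioning the indices of $U_{\rho,l}$ into the sub-block $P = \{\mu > r-\rho_\ell\}$ and the quotient block $Q = (r-\rho_l, r-\rho_\ell]$, the induced quotient metric in the frame~\eqref{eq: frame of proper quotients} is obtained as the Schur complement
\[
H^{\mathrm{quot}} \;=\; H^{QQ} - H^{QP}(H^{PP})^{-1}H^{PQ}.
\]
Since $H^{QP}_{\alpha\mu}=\bar\zeta_{\alpha\mu}+O(2)$ at leading order, the Schur correction equals $-\sum_{\mu > r-\rho_\ell}\bar\zeta_{\alpha\mu}\zeta_{\beta\mu}+O(3)$. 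The full second-order expansion of $H^{\mathrm{quot}}_{\alpha\beta}$ then consists of the Kronecker delta, a possibly nonzero linear-in-$\zeta$ term $L_{\alpha\beta}(\zeta)$ (of the form $\zeta_{\beta\alpha}$ or $\bar\zeta_{\alpha\beta}$, present only when $\alpha,\beta$ lie in distinct blocks of the flag), a quadratic-in-$\zeta$ term $\sum_\lambda \zeta_{\lambda\alpha}\bar\zeta_{\lambda\beta}$, the Schur contribution above, and the curvature piece $-\sum c_{jk\alpha\beta}z_j\bar z_k$.

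The key observation for the second stage is that the linear terms $L_{\alpha\beta}(\zeta)$ make both $\partial H^{\mathrm{quot}}|_0$ and $\bar\partial H^{\mathrm{quot}}|_0$ nonzero, so the frame~\eqref{eq: frame of proper quotients} is \emph{not} a normal frame for the quotient bundle. Even though $H^{\mathrm{quot}}|_0=I$, I would therefore use the general formula
\[
\Theta^{\mathrm{quot}}\big|_0 \;=\; -\partial\bar\partial H^{\mathrm{quot}}\big|_0 \;+\; \partial H^{\mathrm{quot}}\big|_0 \wedge \bar\partial H^{\mathrm{quot}}\big|_0,
\]
valid at any point where the metric equals the identity, rather than the simpler $-\partial\bar\partial H^{\mathrm{quot}}|_0$. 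Plugging in the expansion from the first stage, the $-\partial\bar\partial$ piece produces $\Theta_{\beta\alpha}$, the expected Schur contribution $+\sum_{\mu>r-\rho_\ell}d\zeta_{\beta\mu}\wedge d\bar\zeta_{\alpha\mu}$, and a sum $-\sum_\lambda d\zeta_{\lambda\alpha}\wedge d\bar\zeta_{\lambda\beta}$ whose $\lambda$-range is too large, depending on the blocks of $\alpha$ and $\beta$. The correction $\partial H^{\mathrm{quot}} \wedge \bar\partial H^{\mathrm{quot}}|_0$ produces exactly the excess terms with $\lambda$ outside $\{1,\ldots,r-\rho_l\}$, with opposite sign; they cancel, leaving the stated range $\lambda\le r-\rho_l$.

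The hard part will be the combinatorial bookkeeping of which $\zeta_{\lambda\mu}$ genuinely exist as coordinates (per condition~\eqref{eq:asterisk}) across the various configurations of blocks, together with verifying the precise match between the excess $-\partial\bar\partial$ contributions and the $+\partial H\wedge \bar\partial H$ correction. Conceptually, it is the departure from a normal frame that generates this correction; the boundary cases $\ell=0$ and $l=m$ simplify on their own, as either the Schur term or the excess $\lambda$-range becomes empty.
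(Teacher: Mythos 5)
Your proposal is correct in substance but takes a genuinely different route from the paper. The paper first invokes Remark~\ref{rem: reduces to grassmannian} to reduce everything to a Grassmann bundle (for the tautological sub-bundles) or to the three-step flag bundle $ \mathbf{s} = (0,\rho_\ell,\rho_l,r) $ (for the quotients); after this reduction the quotient block is a \emph{single} block of the flag, the Gram matrix $ H^{QQ} $ of the frame $ (\epsilon_\alpha) $ has no linear terms (in the first part the paper checks that $ (\epsilon_{r-s+1},\dots,\epsilon_r) $ is in fact a normal frame), and the entire correction is packaged structurally as the second-fundamental-form identity $ \Theta^{\mathrm{quot}} = \Theta(U_{\mathbf{s},2})\big|_{\mathrm{quot}} + b\wedge b^{\star} $, with $ b^{\star} $ read off from $ \bar{\partial} $ of the explicit orthogonal splitting $ p^{\star}\tilde{\epsilon}_\alpha = \epsilon_\alpha - \sum_{\mu}(\bar{\zeta}_{\alpha\mu}+\operatorname{O}(|y|^2))\epsilon_\mu $. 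Your Schur complement is exactly the Gram matrix of that split frame, and your Schur correction $ -\sum_{\mu}\bar{\zeta}_{\alpha\mu}\zeta_{\beta\mu} $ reproduces the $ b\wedge b^{\star} $ term, so the two computations coincide where they overlap; what you do differently is to stay on the general flag bundle, where off-diagonal blocks of $ H^{QQ} $ acquire linear terms $ \bar{\zeta}_{\alpha\beta} $ and you must use the non-normal-frame formula with the $ \partial H\wedge\bar{\partial}H $ correction to cancel the excess $ -d\zeta_{\lambda\alpha}\wedge d\bar{\zeta}_{\lambda\beta} $ with $ \lambda > r-\rho_l $. That cancellation mechanism is real (it can be checked directly, e.g., on the complete flag bundle of a rank-$3$ bundle for $ U_3/U_1 $), so your argument goes through; the trade-off is that the paper's pull-back reduction eliminates precisely the block-by-block bookkeeping of Condition~\eqref{eq:asterisk} that you correctly flag as the hard part, while your version avoids appealing to the functoriality remark and the $ b\wedge b^{\star} $ identity at the cost of a heavier verification (and of some care with the sign and index conventions in $ \partial H\wedge\bar{\partial}H $, which you state only up to convention).
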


By a slight abuse of notation, we have identified the $ (1,1) $-form $ \Theta_{\beta\alpha} $ with its pull-back $ \pi_{\rho}^{*} \Theta_{\beta\alpha} $ along $ \pi_{\rho} $, and we have omitted the dependence on $ x_0 $ in the endomorphism part $ {e_\alpha^\vee} \otimes {e_\beta} $.

\begin{proof}[Proof of Theorem~\ref{thm: curvature of natural bundles}, 1\textsuperscript{st} part]
	First, we want to prove Theorem~\ref{thm: curvature of natural bundles} when $ 0 = \ell < l \le m $.
	This corresponds to the case of tautological vector bundles.
	
	Set $ s := \rho_l $.
	By Remark~\ref{rem: reduces to grassmannian} the curvature $ \Theta(U_{\rho,l}, h_{\rho,l}) $ is just the pull-back through the projection $ \pi_{(0,s,r)}^{\rho} $ of the curvature $ \Theta(\gamma_{s}, h_s) $ of the tautological vector bundle $ (\gamma_{s}, h_s) \to \GG_{s}(E) $.
	Thus, we can assume that $ \FF_{\rho}(E) = \GG_{s}(E) $ and $ U_{\rho,l} = \gamma_{s} $.
	
	A local holomorphic frame for $ \gamma_{s} $ is given by the sections
	\begin{equation*}
		\epsilon_\alpha (z,\zeta) = e_\alpha(z) + \sum_{1 \le \lambda \le r-s} \zeta_{\lambda\alpha} e_\lambda(z) , \quad \text{for} \ r-s < \alpha \le r .
	\end{equation*}
	Observe that, for the sequence $ (0,s,r) $, Condition~\eqref{eq:asterisk} on $ \lambda $ and $ \alpha $ for the coordinate $ \zeta_{\lambda\alpha} $ becomes $ 1 \le \lambda \le r-s < \alpha \le r $.
	We want to show that $ (\epsilon_{r-s+1},\dots,\epsilon_r) $ is also a local normal frame of $ (\gamma_{s},h_s) $ at $ (x_0,\mathbf{f}_0) $.
	
	To simplify the notation in what follows, we omit the dependence on the coordinates.
	The $ (\alpha,\beta) $-entry of the Hermitian matrix associated to $ h_s $ with respect to the coordinates $ y := (z,\zeta) $ is
	\begin{equation}\label{eq: metric of tautological frame}
		\begin{split}
			&\langle \epsilon_\alpha, \epsilon_\beta \rangle \\
			&= \left< e_\alpha + \sum_{\lambda} \zeta_{\lambda\alpha} e_\lambda, e_\beta + \sum_{\mu} \zeta_{\mu\beta} e_\mu  \right> \\
			&= \left< e_\alpha, e_\beta \right> +
			\sum_{\mu} \bar{\zeta}_{\mu\beta} \left< e_\alpha, e_\mu  \right> +
			\sum_{\lambda} \zeta_{\lambda\alpha} \left< e_\lambda, e_\beta \right> +
			\sum_{\lambda,\mu} \zeta_{\lambda\alpha} \bar{\zeta}_{\mu\beta} \left< e_\lambda, e_\mu  \right> \\
			&= \delta_{\alpha\beta} - \sum_{j,k} c_{jk\alpha\beta} z_j \bar{z}_k + \sum_{\mu} \bar{\zeta}_{\mu\beta} \delta_{\alpha\mu} + \sum_{\lambda} \zeta_{\lambda\alpha} \delta_{\lambda\beta} + \sum_{\lambda,\mu} \zeta_{\lambda\alpha}\bar{\zeta}_{\mu\beta} \delta_{\lambda\mu} + \operatorname{O}(|y|^3) .
		\end{split}
	\end{equation}
	Now, observe that $ \sum_{\mu} \bar{\zeta}_{\mu\beta} \delta_{\alpha\mu} = 0 $ since $ \mu \le r-s < \alpha $.
	The same holds for $ \sum_{\lambda} \zeta_{\lambda\alpha} \delta_{\lambda\beta} $ as $ \lambda \le r-s < \beta $.
	We have shown that
	\begin{equation*}
		\langle \epsilon_\alpha, \epsilon_\beta \rangle = \delta_{\alpha\beta} - \sum_{1 \le j,k \le n} c_{jk\alpha\beta} z_j \bar{z}_k + \sum_{1 \le \lambda \le r-s} \zeta_{\lambda\alpha}\bar{\zeta}_{\lambda\beta} + \operatorname{O}(|y|^3)
	\end{equation*}
	thus, $ (\epsilon_{r-s+1},\dots,\epsilon_r) $ is a normal coordinate frame at $ (x_0,\mathbf{f}_0) $.
	
	Therefore, the $ (\beta,\alpha) $-entry of the curvature matrix $ \Theta(\gamma_{s},h_s)_{(x_0,\mathbf{f}_0)} $ with respect to the coordinates $ y $ is
	\begin{equation*}
		\sum_{1 \le j,k \le n} c_{jk\alpha\beta} dz_j \wedge d\bar{z}_k - \sum_{1 \le \lambda \le r-s} d\zeta_{\lambda\alpha} \wedge d\bar{\zeta}_{\lambda\beta}
	\end{equation*}
	and the first part of the proof is over.
\end{proof}

\begin{proof}[Proof of Theorem~\ref{thm: curvature of natural bundles}, 2\textsuperscript{nd} part]
	Now, suppose that $ 0 < \ell < l \le m $.
	We are going to compute the Chern curvature of all the possible universal quotients.
	
	Define the sequence $ \mathbf{s} $ as $ (0,s_1=\rho_{\ell},s_2=\rho_{l},r) $.
	Again by Remark~\ref{rem: reduces to grassmannian} we note that it is sufficient to compute the curvature of $ U_{\mathbf{s},2} / U_{\mathbf{s},1} $ (equipped with the quotient of the restriction metrics) at a point of $ \FF_{\mathbf{s}}(E) $.
	Observe that there is a slight abuse of notation when $ 0 < \ell < l = m $.
	In this case, we shall consider the universal quotient of the Grassmann bundle $ \GG_{s_1}(E) $ and the following calculations also cover this case.
	
	The local holomorphic coordinates $ (z,\zeta) $ on $ \FF_{\mathbf{s}}(E) $ centered at $ (x_0,\mathbf{f}_0) $ parameterize the flags $ 0 \subset V_{z,s_1} \subset V_{z,s_2} \subseteq E_{z} $.
	Thus, the local holomorphic frame \eqref{eq: frame of proper quotients} for $ U_{\mathbf{s},2} / U_{\mathbf{s},1} $ is given by the sections
	\begin{equation*}
		\tilde{\epsilon}_\alpha (z,\zeta) = \text{image of $ \epsilon_\alpha(z,\zeta) $ in $ V_{z,s_2} / V_{z,s_1} $} , \quad \text{for} \ r-s_2 < \alpha \le r-s_1 .
	\end{equation*}
	Consider the tautological exact sequence
	\[
	0 \to U_{\mathbf{s},1} \overset{\iota}{\hookrightarrow} U_{\mathbf{s},2} \overset{p}{\twoheadrightarrow} U_{\mathbf{s},2} / U_{\mathbf{s},1} \to 0
	\]
	over $ \FF_{\mathbf{s}}(E) $, and denote by $ p^{\star} \colon U_{\mathbf{s},2} / U_{\mathbf{s},1} \to U_{\mathbf{s},2} $ be the $ C^{\infty} $ orthogonal splitting of the natural projection $ p $.
	We have,
	\begin{equation*}
		p^{\star} \cdot \tilde{\epsilon}_\alpha (z,\zeta) = \epsilon_\alpha(z,\zeta) + \sum_{r-s_1 < \mu \le r} u_{\alpha\mu}(z,\zeta) \epsilon_\mu(z,\zeta) ,
	\end{equation*}
	for some smooth function $ u_{\alpha\mu} $.
	As before, to simplify the notation set $ y := (z,\zeta) $.
	For $ r- s_1 < \beta \le r $, by using the computations made in~\eqref{eq: metric of tautological frame}, we have
	\begin{equation*}
		\begin{split}
			0 &= \langle \tilde{\epsilon}_\alpha, p \cdot \epsilon_\beta \rangle \\
			&= \langle p^{\star} \cdot \tilde{\epsilon}_\alpha, \epsilon_\beta \rangle \\
			&= \left< \epsilon_\alpha + \sum_{r-s_1 < \mu} u_{\alpha\mu} \epsilon_\mu, \epsilon_\beta  \right> \\
			&= \left< \epsilon_\alpha, \epsilon_\beta \right> +
			\sum_{r-s_1 < \mu} u_{\alpha\mu} \left< \epsilon_\mu, \epsilon_\beta \right> \\
			&= \delta_{\alpha\beta} - \sum_{j,k} c_{jk\alpha\beta} z_j \bar{z}_k + \bar{\zeta}_{\alpha\beta} + \sum_{\lambda \le r-s_2} \zeta_{\lambda\alpha} \bar{\zeta}_{\lambda\beta} \\
			&+ u_{\alpha\beta} + \sum_{r-s_1 < \mu} u_{\alpha\mu}
			\left( - \sum_{j,k} c_{jk\mu\beta} z_j \bar{z}_k + \sum_{\nu \le r-s_1} \zeta_{\nu\mu} \bar{\zeta}_{\nu\beta} \right) + \operatorname{O}(|y|^3) .
		\end{split}
	\end{equation*}
	
	Note that the terms $ \sum_{\nu \le r - s_1} \bar{\zeta}_{\nu\beta} \delta_{\alpha\nu} $ and  $ \sum_{\sigma \le r - s_2} {\zeta}_{\sigma\alpha} \delta_{\sigma\beta} $ coming from the product $ \left< \epsilon_\alpha, \epsilon_\beta \right> $ equals $ \bar{\zeta}_{\alpha\beta} $ and 0 respectively.
	This is because of the inequality $ r - s_2 < \alpha \le r - s_1 < \beta $.
	Instead, the terms $ \sum_{\nu \le r - s_1} \bar{\zeta}_{\nu\beta} \delta_{\mu\nu} $ and  $ \sum_{\sigma \le r - s_2} {\zeta}_{\sigma\mu} \delta_{\sigma\beta} $ coming from the product $ \left< \epsilon_\mu, \epsilon_\beta \right> $ are both zero, as $ r-s_1 < \mu, \beta $.
	
	Therefore, we have obtained the equality $ u_{\alpha\beta} = - \bar{\zeta}_{\alpha\beta} + \operatorname{O}(|y|^2) $ and, consequently,
	\[
	p^{\star} \tilde{\epsilon}_\alpha = \epsilon_\alpha - \sum_{r-s_1 < \mu} \left( \bar{\zeta}_{\alpha\mu} + \operatorname{O}(|y|^2) \right) \epsilon_\mu .
	\]
	
	Denoting by $ b \in \mathcal{A}^{1,0}\big( \FF_{\mathbf{s}}(E), \operatorname{Hom}(U_{\mathbf{s},1},U_{\mathbf{s},2} / U_{\mathbf{s},1}) \big) $ the second fundamental form of $ U_{\mathbf{s},1} $ in $ U_{\mathbf{s},2} $ and by $ b^{\star} \in \mathcal{A}^{0,1}\big( \FF_{\mathbf{s}}(E), \operatorname{Hom}(U_{\mathbf{s},2} / U_{\mathbf{s},1},U_{\mathbf{s},1}) \big) $ its adjoint, we have
	\begin{equation*}
		- \iota \circ b^{\star} \cdot \tilde{\epsilon}_\alpha \rvert_{(x_0,\mathbf{f}_0)} = \bar{\partial} p^{\star} \tilde{\epsilon}_\alpha \rvert_{(0,0)} = - \sum_{r-s_1 < \mu} d \bar{\zeta}_{\alpha\mu} \otimes \epsilon_\mu .
	\end{equation*}
	Therefore, the curvature $ \Theta(U_{\mathbf{s},2} / U_{\mathbf{s},1},h_{\mathbf{s},(2,1)})_{(x_0,\mathbf{f}_0)} $ with respect to the coordinates $ y $ is
	\begin{equation*}
		\begin{split}
			& \bigg( \Theta(U_{\mathbf{s},2},h_{\mathbf{s},2}) \big|_{U_{\mathbf{s},2} / U_{\mathbf{s},1}} + b \wedge b^{\star} \bigg)_{(x_0,\mathbf{f}_0)}\\
			&= \sum_{\alpha,\beta}
			\left( \Theta_{\beta\alpha} - \sum_{1 \le \lambda \le r-s_2} d\zeta_{\lambda\alpha} \wedge d\bar{\zeta}_{\lambda\beta} + \sum_{r-s_1 < \mu \le r} d{\zeta}_{\beta\mu} \wedge d\bar{\zeta}_{\alpha\mu} \right) \otimes \tilde{\epsilon}_\alpha^{\vee} \otimes \tilde{\epsilon}_\beta
		\end{split}
	\end{equation*}
	where the equality follows from the expressions
	\begin{equation*}
		b_{(x_0,\mathbf{f}_0)}^{\star} = \sum_{\substack{r-s_2 < \alpha \le r-s_1 \\ r-s_1 < \mu}} d \bar{\zeta}_{\alpha\mu} \otimes \tilde{\epsilon}_\alpha^{\vee} \otimes \epsilon_\mu,
		\quad
		b_{(x_0,\mathbf{f}_0)} = \sum_{\substack{r-s_2 < \beta \le r-s_1 \\ r-s_1 < \lambda}} d {\zeta}_{\beta\lambda} \otimes \epsilon_\lambda^{\vee} \otimes \tilde{\epsilon}_\beta
	\end{equation*}
	and from the computation of $ \Theta(U_{\mathbf{s},2},h_{\mathbf{s},2}) $ made in the 1\textsuperscript{st} part of the proof.
\end{proof}

The second part of the proof above follow the computations for the curvature of the tautological and universal quotient bundles of the Grassmann manifolds given in \cite[\S~16.C]{Dem01}.

\section{Pointwise universal Gysin formul{\ae} for flag bundles}
From now on, fix two indices $ \ell $ and $ l $ such that $ 0 \le \ell < l \le m $.
In order to simplify the notation, we denote by $ \mathcal{E} $ the universal vector bundle $ U_{\rho,l}/U_{\rho,\ell} $ of $ \FF_{\rho}(E) $.
Finally, let $ H $ be the Hermitian metric $ h_{\rho,(l,\ell)} $ naturally induced by $ h $ on $ \mathcal{E} $, as mentioned in Remark~\ref{rem: notation for metrics}.

\subsection{Intrinsic expression of the curvature}\label{sect: intrinsic curvature vector bundle}
In this section we express intrinsically the Chern curvature of the universal vector bundles.
To do this we use the formul{\ae} provided in our Theorem~\ref{thm: curvature of natural bundles}.

Recall that we have a natural (smooth) orthogonal splitting of the tangent bundle $ T_{\FF_{\rho}(E)} $ as the direct sum of the relative tangent bundle $ T_{\FF_{\rho}(E) / X} $ plus a horizontal part $ T_{\FF_{\rho}(E) / X}^{\perp_h} $, which depends only on $ h $ (see, for instance, \cite{DF20}).
Therefore, we can write the curvature tensor $ \Theta(\mathcal{E},H) $ as the sum of a {\textquotedblleft}{vertical}{\textquotedblright} tensor
\begin{equation*}
	\Theta_{(\mathcal{E},H)}^{\operatorname{vert}} \in C^{\infty} \left( \FF_{\rho}(E), \Lambda^{1,1}T_{\FF_{\rho}(E) / X}^{\vee} \otimes \operatorname{End}(\mathcal{E}) \right)
\end{equation*}
plus a {\textquotedblleft}{horizontal}{\textquotedblright} tensor
\begin{equation*}
	\Theta_{(\mathcal{E},H)}^{\operatorname{hor}} \in C^{\infty} \left( \FF_{\rho}(E), \Lambda^{1,1}(T_{\FF_{\rho}(E) / X}^{\perp_h})^{\vee} \otimes \operatorname{End}(\mathcal{E}) \right) .
\end{equation*}

Our aim is to give an explicit expression of $ \Theta_{(\mathcal{E},H)}^{\operatorname{hor}} $, generalizing what was done in \cite[Section~{2.1}]{DF20} for the curvature of universal line bundles.

If $ p_2 \colon T_{\FF_{\rho}(E)} \to T_{\FF_{\rho}(E)/X}^{\perp_h} $ is the projection on $ T_{\FF_{\rho}(E) / X}^{\perp_h} $ relative to the above mentioned splitting, we have that
\begin{equation}\label{eq: horizontal curvature compose projection}
	\Theta_{(\mathcal{E},H)}^{\operatorname{hor}} = \Theta(\mathcal{E},H) \circ (p_2 \otimes \bar{p}_2)
\end{equation}
where the right hand side of Formula~\eqref{eq: horizontal curvature compose projection} is a slightly improper notation which means that we are composing the $ (1,1) $-form part of $ \Theta(\mathcal{E},H) $ with $ p_2 \otimes \bar{p}_2 $.

Fix a point $ (x_0,\mathbf{f}_0) \in \FF_{\rho}(E) $.
With respect to the coordinates $ (z,\zeta) $ introduced in Section~\ref{sect: coordinates flag and frames universal bundles}, Theorem~\ref{thm: curvature of natural bundles} and Formula~\eqref{eq: horizontal curvature compose projection} give the equality
\begin{equation}\label{eq: curvature tensor of horizontal part}
	\Theta_{(\mathcal{E},H)}^{\operatorname{hor}}(x_0,\mathbf{f}_0) =
	\sum_{r-\rho_l < \alpha,\beta \le r-\rho_{\ell}} \Theta_{\beta \alpha} \otimes e_\alpha^{\vee}(x_0) \otimes e_\beta(x_0) .
\end{equation}
Observe that, whichever the local frame of $ \mathcal{E} $, that is~\eqref{eq: frame of subbundles},~\eqref{eq: frame of quotients} or~\eqref{eq: frame of proper quotients}, we can suppose that it coincides with $ \big( e_{r-\rho_l+1}(x_0),\dots,e_{r-\rho_\ell}(x_0) \big) $ if evaluated in $ (x_0,\mathbf{f}_0) $, since $ (e_1,\dots,e_r) $ is a local normal frame at $ x_0 $.

Now, we define a section
$$ \theta_{(\ell,l)} \colon \FF_{\rho}(E) \to {\Lambda}^{1,1}{T_{\FF_{\rho}(E)}^{\vee}} \otimes \operatorname{End(\mathcal{E})} $$
as follows.
For $ x \in X, $ let $ \mathbf{f} \in \FF_{\rho}(E_x) $ be given by a unitary basis $ (v_1,\dots,v_r) $ of $ E_x $.
We set
\begin{equation}\label{eq: horizontal curvature explicit}
	\theta_{(\ell,l)}(x,\mathbf{f}) = \frac{i}{2\pi} \sum_{r-\rho_l < \lambda,\mu \le r-\rho_{\ell}} \bigl\langle \pi_{\rho}^*\Theta(E,h)_{(x,\mathbf{f})}\cdot {v_\lambda}, {v_\mu} \bigr\rangle_h \otimes v_{\lambda}^{\vee} \otimes v_{\mu} . 
\end{equation}

\begin{lemma}\label{lem:blocksgeneral}
	The section $ \theta_{(\ell,l)} $ is well defined, \textsl{i.e.} it does not depend upon the choice of a particular representative $ \mathbf v= (v_1,\dots,v_r) $ for $ \mathbf{f} $.
\end{lemma}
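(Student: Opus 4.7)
The plan is to exhibit $\theta_{(\ell,l)}(x,\mathbf f)$ in a manifestly basis-free form. I adopt the convention (consistent with Section~\ref{sect: coordinates flag and frames universal bundles}) that a unitary basis $\mathbf v = (v_1,\dots,v_r)$ \emph{represents} the flag $\mathbf f$ if $V_{x,\rho_j} = \operatorname{Span}\{v_{r-\rho_j+1},\dots,v_r\}$ for every $0 \le j \le m$. Set $W_j := V_{x,\rho_j} \cap V_{x,\rho_{j-1}}^{\perp_h}$ for $1 \le j \le m$; then $E_x = W_1 \oplus \dots \oplus W_m$ is the intrinsic orthogonal decomposition determined by $\mathbf f$ and $h$, and $v_{r-\rho_j+1},\dots,v_{r-\rho_{j-1}}$ is an orthonormal basis of $W_j$. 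In particular, letting $I := \{r-\rho_l+1,\dots,r-\rho_\ell\}$, the vectors $(v_\lambda)_{\lambda \in I}$ form an orthonormal basis of the intrinsic subspace $W_{[\ell,l]} := W_{\ell+1}\oplus\dots\oplus W_l \subset E_x$.

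First I would observe that if $\mathbf v'$ is another unitary basis representing $\mathbf f$, the change-of-basis matrix $U$ (with $v'_\alpha = \sum_\sigma U_{\sigma\alpha}v_\sigma$) is simultaneously block upper-triangular with respect to the flag \emph{and} unitary; a standard linear-algebra argument then forces $U$ to be block-diagonal, with $j$-th diagonal block a unitary automorphism of $W_j$. Restricted to indices in $I$, this is a unitary automorphism of $W_{[\ell,l]}$ that additionally respects the finer decomposition $W_{[\ell,l]} = W_{\ell+1} \oplus \dots \oplus W_l$.

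Second, I would rewrite the sum
\[
\sum_{\lambda,\mu \in I} \bigl\langle \pi_\rho^*\Theta(E,h)_{(x,\mathbf f)} \cdot v_\lambda, v_\mu\bigr\rangle_h\, v_\lambda^\vee \otimes v_\mu
\]
as the matrix representation, in the orthonormal basis $(v_\lambda)_{\lambda \in I}$, of the $(1,1)$-form with values in $\operatorname{End}(W_{[\ell,l]})$ obtained by pre-composing $\pi_\rho^*\Theta(E,h)_{(x,\mathbf f)}$ with the inclusion $W_{[\ell,l]} \hookrightarrow E_x$ and post-composing with the orthogonal projection $E_x \twoheadrightarrow W_{[\ell,l]}$. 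Since this endomorphism-valued form is defined without reference to any basis, and its matrix representation is the same regardless of the chosen orthonormal basis, the expression is invariant under the block-diagonal change of basis produced in the previous step.

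Finally, I would transport everything via the canonical isomorphism $W_{[\ell,l]} \simeq V_{x,\rho_l}/V_{x,\rho_\ell} = \mathcal E_{(x,\mathbf f)}$ induced by the quotient map, which identifies $v_\lambda^\vee \otimes v_\mu$ (for $\lambda,\mu \in I$) with its image in $\operatorname{End}(\mathcal E_{(x,\mathbf f)})$; this identification is itself intrinsic, so the resulting element of $\Lambda^{1,1}T^\vee_{\FF_\rho(E),(x,\mathbf f)}\otimes \operatorname{End}(\mathcal E_{(x,\mathbf f)})$ depends only on $(x,\mathbf f)$ and on $(E,h)$. No genuine obstacle arises: the only non-formal step is the \textquotedblleft block upper-triangular plus unitary implies block-diagonal\textquotedblright\ observation, which is elementary.
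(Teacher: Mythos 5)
Your proof is correct and follows essentially the same route as the paper's: both reduce the claim to the observation that any two unitary bases representing $\mathbf{f}$ differ by a block-diagonal unitary matrix, under which the defining expression for $\theta_{(\ell,l)}$ is invariant. The paper verifies this invariance by an explicit index computation using the unitarity relations $\sum_\lambda a_{\alpha\lambda}\overline{a_{\tilde\alpha\lambda}}=\delta_{\alpha\tilde\alpha}$, whereas you package the same fact as the basis-independence of the compression of $\pi_{\rho}^*\Theta(E,h)$ to the intrinsic subspace $W_{[\ell,l]}$ --- a presentational rather than substantive difference.
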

\begin{proof}
	Take a local normal frame $ (e_1,\dots,e_r) $ at $ x $ such that $ \mathbf{e} := \bigl(e_1(x),\dots,e_r(x)\bigr) $ and $ \mathbf{v} $ identify the same flag $ \mathbf{f} $.
	Consequently, we know that if an index $ \lambda $ is such that
	\[
	r-\rho_l < \lambda \le r-\rho_{\ell}
	\]
	then we can write
	\begin{equation*}
		v_\lambda = \sum_{r-\rho_l < \alpha \le r-\rho_{\ell}} a_{\alpha \lambda} e_{\alpha}(x),
	\end{equation*}
	where the coefficients $ (a_{pq}) $ are the entries of the change of coordinates matrix
	\begin{equation*}
		\begin{pmatrix}
			A_{11} & 0 & 0 \\
			0 & \ddots & 0 \\
			0 & 0 & A_{mm}
		\end{pmatrix}.
	\end{equation*}
	Notice that the diagonal block $ A_{jj} $ is again a unitary matrix of size $ \rho_{m-j+1} - \rho_{m-j} $.
	
	To simplify the notation in what follows, we omit the dependence on $ x $ (resp. $ (x,\mathbf{f}) $) of $\mathbf{e}$ (resp. $  \pi_{\rho}^*\Theta(E,h)_{(x,\mathbf{f})} $).
	We thus get the chain of equalities
	\begin{align*}
		& \frac{i}{2\pi} \sum_{\lambda,\mu} \bigl\langle \pi_{\rho}^*\Theta(E,h)\cdot {v_\lambda}, {v_\mu} \bigr\rangle_h \otimes v_{\lambda}^{\vee} \otimes v_{\mu} \\
		&= \frac{i}{2\pi} \sum_{\lambda,\mu} {\left\langle {\pi_{\rho}^*\Theta(E,h)}\cdot \left( \sum_{\alpha} a_{\alpha \lambda} e_\alpha \right), \sum_{\beta} a_{\beta \mu} e_\beta \right\rangle_h} \otimes \left( \sum_{\tilde{\alpha}} a_{\tilde{\alpha} \lambda} e_{\tilde{\alpha}} \right)^{\vee} \otimes \sum_{\tilde{\beta}} a_{\tilde{\beta} \mu} e_{\tilde{\beta}} \\
		&= \frac{i}{2\pi} \sum_{\alpha,\tilde{\alpha}}\underbrace{ \sum_{\lambda} a_{\alpha \lambda} \overline{a_{\tilde{\alpha} \lambda}}}_{=\delta_{\alpha\tilde{\alpha}}} \sum_{\tilde{\beta},\beta}\underbrace{ \sum_{\mu} a_{\tilde{\beta} \mu} \overline{a_{\beta \mu}}}_{=\delta_{\tilde{\beta}\beta}} \,\bigl\langle {\pi_{\rho}^*\Theta(E,h)}\cdot {e_\alpha}, {e_\beta} \bigr\rangle_h \otimes e_{\tilde{\alpha}}^{\vee} \otimes e_{\tilde{\beta}} \\
		&= \frac{i}{2\pi} \sum_{\alpha,\beta}\bigl\langle {\pi_{\rho}^*\Theta(E,h)}\cdot {e_\alpha}, {e_\beta} \bigr\rangle_h \otimes e_{\alpha}^{\vee} \otimes e_{\beta},
	\end{align*}
	where the indices of the summations are such that
	\[
	\lambda,\mu,\alpha,\beta,\tilde{\alpha},\tilde{\beta} = r-\rho_l + 1, \dots, r-\rho_{\ell} .
	\]
	Hence the lemma follows.
\end{proof}
Observe also that, by definition, the section $ \theta_{(\ell,l)} $ is smooth, hence it belongs to the space $ \mathcal{A}^{1,1}\bigl(\FF_{\rho}(E),\operatorname{End}(\mathcal{E})\bigr) $.

Now we show that $ \theta_{(\ell,l)} $ coincides in each point of the flag bundle with the $ h $-horizontal part of the Chern curvature of $ (\mathcal{E},H) $.
\begin{lemma}\label{lem: definizione di theta generale}
	The equality
	\[
	\theta_{(\ell,l)} = \frac{i}{2\pi}\Theta_{(\mathcal{E},H)}^{\operatorname{hor}}
	\]
	holds.
\end{lemma}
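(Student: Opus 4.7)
The plan is to establish the equality pointwise. Fix an arbitrary point $(x_0,\mathbf{f}_0) \in \FF_{\rho}(E)$ and choose a local normal frame $(e_1,\dots,e_r)$ of $(E,h)$ at $x_0$ adapted to the flag $\mathbf{f}_0$, as in Section~\ref{sect: coordinates flag and frames universal bundles}. Since both sides of the claimed identity are sections of the same bundle $\Lambda^{1,1}T_{\FF_{\rho}(E)}^{\vee} \otimes \operatorname{End}(\mathcal{E})$, and $(x_0,\mathbf{f}_0)$ is arbitrary, it suffices to compare fibers at this point.

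By Lemma~\ref{lem:blocksgeneral}, the value $\theta_{(\ell,l)}(x_0,\mathbf{f}_0)$ can be computed using \emph{any} unitary basis of $E_{x_0}$ that represents $\mathbf{f}_0$. I would take the canonical one, namely $(v_1,\dots,v_r) = \bigl(e_1(x_0),\dots,e_r(x_0)\bigr)$, which is unitary precisely because $(e_1,\dots,e_r)$ is normal at $x_0$. Expanding $\pi_\rho^*\Theta(E,h)_{(x_0,\mathbf{f}_0)}$ in this frame yields
\[
\pi_\rho^*\Theta(E,h)_{(x_0,\mathbf{f}_0)} = \sum_{1\le \alpha,\beta \le r} \Theta_{\beta\alpha} \otimes e_\alpha^\vee(x_0) \otimes e_\beta(x_0),
\]
with the $\Theta_{\beta\alpha}$ pulled back via $\pi_\rho$. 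Pairing with $e_\mu(x_0)$ and using the orthonormality $\langle e_\beta(x_0),e_\mu(x_0)\rangle_h = \delta_{\beta\mu}$ gives
\[
\bigl\langle \pi_\rho^*\Theta(E,h)_{(x_0,\mathbf{f}_0)} \cdot e_\lambda(x_0), e_\mu(x_0)\bigr\rangle_h = \Theta_{\mu\lambda},
\]
so that, substituting into~\eqref{eq: horizontal curvature explicit} and relabelling $\lambda \leftrightarrow \alpha$, $\mu \leftrightarrow \beta$,
\[
\theta_{(\ell,l)}(x_0,\mathbf{f}_0) = \frac{i}{2\pi}\sum_{r-\rho_l < \alpha,\beta \le r-\rho_\ell} \Theta_{\beta\alpha} \otimes e_\alpha^\vee(x_0) \otimes e_\beta(x_0).
\]

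This is exactly $\frac{i}{2\pi}$ times the right-hand side of Formula~\eqref{eq: curvature tensor of horizontal part}, after recalling the convention that, at the point $(x_0,\mathbf{f}_0)$, the vectors $e_\alpha(x_0)$ for $r-\rho_l < \alpha \le r-\rho_\ell$ are identified with their images in $\mathcal{E}_{(x_0,\mathbf{f}_0)}$ under the relevant frame~\eqref{eq: frame of subbundles},~\eqref{eq: frame of quotients} or~\eqref{eq: frame of proper quotients} (each of which, at $\zeta=0$, reduces to $e_\alpha(x_0)$ or its class modulo $V_{x_0,\rho_\ell}$). Since $(x_0,\mathbf{f}_0)$ was arbitrary, this proves the lemma.

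I do not anticipate any serious obstacle: the nontrivial curvature computation has already been carried out in Theorem~\ref{thm: curvature of natural bundles} and encoded in~\eqref{eq: curvature tensor of horizontal part}, while Lemma~\ref{lem:blocksgeneral} provides the freedom to select the normal frame at the given point. The only care needed is the identification between the basis vectors of $E_{x_0}$ sitting in indices $r-\rho_l < \alpha \le r-\rho_\ell$ and the corresponding basis vectors of the fiber $\mathcal{E}_{(x_0,\mathbf{f}_0)}$, which, thanks to the adaptedness of the normal frame to the flag, is canonical at this point.
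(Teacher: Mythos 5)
Your proposal is correct and follows essentially the same route as the paper's own proof: both invoke Lemma~\ref{lem:blocksgeneral} to evaluate $\theta_{(\ell,l)}$ on the adapted normal frame $\bigl(e_1(x_0),\dots,e_r(x_0)\bigr)$, identify $\bigl\langle \pi_\rho^*\Theta(E,h)\cdot e_\lambda, e_\mu\bigr\rangle_h$ with $\Theta_{\mu\lambda}$, and then match the result against Formula~\eqref{eq: curvature tensor of horizontal part}. The extra care you take with the identification of the frame of $\mathcal{E}$ at $(x_0,\mathbf{f}_0)$ is exactly the remark the paper makes immediately after that formula.
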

\begin{proof}
	At any given $ (x,\mathbf{f}) \in \FF_{\rho}(E) $, choose $ (e_1,\dots,e_r) $ to be a local normal frame for $ E $ at $ x $ such that $ \mathbf{f} $ is given by $ \bigl(e_1(x),\dots,e_r(x)\bigr) $, and consider the induced holomorphic coordinates around $ (x,\mathbf{f})$ as in Section~\ref{sect: coordinates flag and frames universal bundles}.
	
	By Lemma~\ref{lem:blocksgeneral} we have the following chain of equalities
	\begin{align*}
		\theta_{(\ell,l)}(x,\mathbf{f}) &= \frac{i}{2\pi} \sum_{r-\rho_l < \lambda,\mu \le r-\rho_{\ell}} \bigl\langle \pi_{\rho}^*\Theta(E,h)_{(x,\mathbf{f})}\cdot {e_\lambda(x)}, {e_\mu(x)} \bigr\rangle_h \otimes {e_{\lambda}(x)}^{\vee} \otimes e_{\mu}(x) \\
		&= \frac{i}{2\pi} \sum_{r-\rho_l < \lambda,\mu \le r-\rho_{\ell}} \pi_{\rho}^*\Theta_{\mu\lambda}(x,\mathbf{f}) \otimes {e_{\lambda}(x)}^{\vee} \otimes e_{\mu}(x) \\
		&= \frac{i}{2\pi} \sum_{r-\rho_l < \lambda,\mu \le r-\rho_{\ell}} \Theta_{\mu\lambda}(x) \otimes {e_{\lambda}(x)}^{\vee} \otimes e_{\mu}(x) \\
		&= \frac{i}{2\pi} \Theta_{(\mathcal{E},H)}^{\operatorname{hor}}(x,\mathbf{f}),
	\end{align*}
	where the last equality follows from Formula~\eqref{eq: curvature tensor of horizontal part}.
\end{proof}

Now we relabel the vertical part $ \frac{i}{2\pi} \Theta_{(\mathcal{E},H)}^{\operatorname{vert}} $ as $ \omega_{(\ell,l)} $.
Obviously, $ \omega_{(\ell,l)} $ is smooth as it equals the difference $ \frac{i}{2\pi} \Theta(\mathcal{E},H) - \theta_{(\ell,l)} $.

Therefore, by Lemma~\ref{lem:blocksgeneral} and Lemma~\ref{lem: definizione di theta generale} we have proved the following.
\begin{proposition}
	For $ 0 \le \ell < l \le m $, we have the equality
	\begin{equation}\label{eq: curvature of universal is sum of two tensors}
		\frac{i}{2\pi} \Theta(\mathcal{E},H) = \theta_{(\ell,l)} + \omega_{(\ell,l)} .
	\end{equation}
\end{proposition}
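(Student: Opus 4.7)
The plan is to unwind the definitions and apply the two preceding lemmas; at this stage the statement is essentially tautological, and the substantive work has already been carried out. First I would invoke the smooth orthogonal splitting
\[
T_{\FF_{\rho}(E)} = T_{\FF_{\rho}(E)/X} \oplus T_{\FF_{\rho}(E)/X}^{\perp_h}
\]
recalled at the beginning of Section~\ref{sect: intrinsic curvature vector bundle}, which decomposes the $(1,1)$-form part of $\Theta(\mathcal{E},H)$ into its vertical and horizontal components, namely $\Theta(\mathcal{E},H) = \Theta_{(\mathcal{E},H)}^{\operatorname{vert}} + \Theta_{(\mathcal{E},H)}^{\operatorname{hor}}$. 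Multiplying through by $i/(2\pi)$ one obtains
\[
\frac{i}{2\pi}\Theta(\mathcal{E},H) = \frac{i}{2\pi}\Theta_{(\mathcal{E},H)}^{\operatorname{hor}} + \frac{i}{2\pi}\Theta_{(\mathcal{E},H)}^{\operatorname{vert}}.
\]

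Next, I would identify the first summand on the right with $\theta_{(\ell,l)}$ via Lemma~\ref{lem: definizione di theta generale}. For the second summand, the relabeling introduced immediately before the proposition sets $\omega_{(\ell,l)} := \frac{i}{2\pi}\Theta_{(\mathcal{E},H)}^{\operatorname{vert}}$; smoothness of $\omega_{(\ell,l)}$ is automatic since it equals the difference of the two smooth tensors $\frac{i}{2\pi}\Theta(\mathcal{E},H)$ and $\theta_{(\ell,l)}$ (the latter being smooth by construction, as noted right after the proof of Lemma~\ref{lem:blocksgeneral}). Substituting these two identifications into the displayed decomposition gives the claimed equality.

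There is no real obstacle to overcome here: all of the content has been absorbed into the preceding results. The genuinely nontrivial ingredients were established earlier, namely Theorem~\ref{thm: curvature of natural bundles}, which provides the explicit pointwise expression for the curvature used to pin down the horizontal part through Formula~\eqref{eq: curvature tensor of horizontal part}, and the unitary change-of-frame computation of Lemma~\ref{lem:blocksgeneral}, which guarantees that $\theta_{(\ell,l)}$ is intrinsically defined on $\FF_{\rho}(E)$ and not merely relative to a normal frame. Once those are in hand, the proposition is a one-line bookkeeping statement combining the definition of $\omega_{(\ell,l)}$ with the identification furnished by Lemma~\ref{lem: definizione di theta generale}.
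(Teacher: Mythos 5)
Your proposal is correct and follows exactly the route of the paper: the orthogonal splitting of $T_{\FF_{\rho}(E)}$ decomposes $\Theta(\mathcal{E},H)$ into horizontal and vertical parts, Lemma~\ref{lem: definizione di theta generale} identifies $\frac{i}{2\pi}\Theta_{(\mathcal{E},H)}^{\operatorname{hor}}$ with $\theta_{(\ell,l)}$, and $\omega_{(\ell,l)}$ is by definition $\frac{i}{2\pi}\Theta_{(\mathcal{E},H)}^{\operatorname{vert}}$. The paper likewise treats the proposition as an immediate consequence of the two preceding lemmas, so your assessment that the statement is essentially bookkeeping is accurate.
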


\subsection{Main result}
Set $ N := \binom{m+1}{2} $, and let $ (\mathcal{E}_1,H_1), \dots, (\mathcal{E}_N,H_N) $ be an enumeration of all the universal vector bundles, of ranks $ r_1, \dots, r_N $ respectively, over $ \FF_{\rho}(E) $.
Let $ F $ be a complex weighted homogeneous polynomial of degree $ 2(d_\rho + k) $ in $ r_1 + \dots + r_N $ variables.

We state the following technical proposition.
\begin{proposition}\label{lem: pol homogeneous general}
	Let $ F\bigl(c_{\bullet}(\mathcal{E}_1,H_1),\dots,c_{\bullet}(\mathcal{E}_N,H_N)\bigr) $ be a complex homogeneous polynomial in the Chern forms of the universal vector bundles on $ \FF_{\rho}(E) $.
	Then the push-forward
	\[
	(\pi_{\rho})_{*} F\bigl(c_{\bullet}(\mathcal{E}_1,H_1),\dots,c_{\bullet}(\mathcal{E}_N,H_N)\bigr)
	\]
	is given by a universal (weighted) homogeneous polynomial formally evaluated in the Chern forms of $ (E,h) $.
\end{proposition}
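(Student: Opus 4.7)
The plan is to combine Theorem~\ref{thm: curvature of natural bundles} with a fiber-integration argument and then to invoke invariance of the push-forward under unitary change of frame. First, I would fix $x_0\in X$ together with a normal holomorphic frame for $(E,h)$ at $x_0$. By Theorem~\ref{thm: curvature of natural bundles}, for every flag $\mathbf{f}_0\in \pi_\rho^{-1}(x_0)$ and with respect to the adapted coordinate chart $(z,\zeta)$ introduced in Section~\ref{sect: coordinates flag and frames universal bundles}, the Chern curvature matrix of each universal bundle $(\mathcal{E}_j,H_j)$ at $(x_0,\mathbf{f}_0)$ is a polynomial of fixed universal shape in the horizontal entries $\Theta_{\beta\alpha}$ of $\Theta(E,h)_{x_0}$ and in the fiber differentials $d\zeta_{\lambda\mu},\,d\bar\zeta_{\lambda\mu}$. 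Substituting into the Chern forms and then into $F$ yields a finite decomposition
\[
F\bigl( c_\bullet(\mathcal{E}_1,H_1),\dots,c_\bullet(\mathcal{E}_N,H_N) \bigr) = \sum_i P_i(\Theta_{\beta\alpha})\wedge \nu_i ,
\]
where each $P_i$ is a polynomial---determined only by $F$ and $\rho$---in the horizontal entries, and each $\nu_i$ is a purely vertical form in the fiber coordinates.

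Next, I would integrate along the fibers of $\pi_\rho$. Since the factors $P_i(\Theta_{\beta\alpha})$ are horizontal at $x_0$, the projection formula gives
\[
(\pi_\rho)_*F\bigl(c_\bullet(\mathcal{E}_j,H_j)\bigr)\big|_{x_0} = \sum_i P_i(\Theta_{\beta\alpha})\cdot \bigl((\pi_\rho)_*\nu_i\bigr)\big|_{x_0} ,
\]
and the fiber integrals $(\pi_\rho)_*\nu_i|_{x_0}$ should depend only on $\rho$ and on $\nu_i$: the fiber over $x_0$ is modeled on $\FF_\rho(\C^r)$, and once read in the unitary frame the forms $\nu_i$ are model vertical forms on this fiber unrelated to the specific $(E,h)$. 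The hardest step I anticipate is precisely making this fiber integral rigorous away from the basepoint $\mathbf{f}_0$, since Theorem~\ref{thm: curvature of natural bundles} is formulated pointwise; one has to stitch the adapted-chart expressions into a single global computation, typically by exploiting the transitive $U(r)$-action on $\FF_\rho(\C^r)$ together with a partition of unity, and to check that the resulting coefficients are genuinely intrinsic to $\rho$ and $F$.

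At this stage, $(\pi_\rho)_*F$ is expressed pointwise at $x_0$ as a polynomial $P(\Theta_{\beta\alpha})$ in the entries of $\Theta(E,h)_{x_0}$ with coefficients depending only on $F$ and $\rho$. Because the push-forward is an intrinsic form on $X$, $P$ must be unchanged under a unitary change of the normal frame at $x_0$; in other words, $P$ is $U(r)$-invariant in the matrix $\bigl(\Theta_{\beta\alpha}\bigr)$. By classical Chern--Weil/invariant theory, the $U(r)$-invariant polynomials in such a curvature matrix are exactly the polynomials in its elementary symmetric functions, i.e.\ polynomials in the Chern forms $c_s(E,h)|_{x_0}$. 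This identifies $(\pi_\rho)_*F$ as a universal polynomial in the $c_\bullet(E,h)$'s, and the correct weighted homogeneity follows by bookkeeping on the bidegrees throughout the decomposition.
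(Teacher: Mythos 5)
Your overall strategy is the paper's: split the curvature of each universal bundle into a horizontal part built from the entries $\Theta_{\beta\alpha}$ of $\Theta(E,h)$ and a vertical part, expand $F$ accordingly, and integrate the vertical factors over the fibre so as to leave universal coefficients in front of polynomials in the $\Theta_{\beta\alpha}$'s; your closing appeal to $U(r)$-invariance to pass from a polynomial in the curvature entries to a polynomial in the Chern forms is also fine (the paper leaves that last step implicit).

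The gap is the one you yourself flag and then do not close. Theorem~\ref{thm: curvature of natural bundles} computes the curvature only \emph{at the centre} $(x_0,\mathbf{f}_0)$ of the adapted chart, so your decomposition $F=\sum_i P_i(\Theta_{\beta\alpha})\wedge\nu_i$, with $P_i$ depending only on $F$ and $\rho$ and $\nu_i$ ``purely vertical'', is a statement at a single point of each fibre, and no fibre integral can be performed from it. A partition of unity does not repair this: what is needed is not a way to patch charts but a way to separate, \emph{globally over the fibre}, the $\Theta(E,h)_x$-dependent part of the integrand from a model vertical part whose integral is a universal constant. This is exactly the content of the paper's section on the intrinsic expression of the curvature: the $h$-orthogonal splitting of $T_{\FF_{\rho}(E)}$ into vertical and horizontal parts yields a \emph{global} smooth decomposition $\frac{i}{2\pi}\Theta(\mathcal{E},H)=\theta_{(\ell,l)}+\omega_{(\ell,l)}$ (Formula~\eqref{eq: curvature of universal is sum of two tensors}), and Lemmas~\ref{lem:blocksgeneral} and~\ref{lem: definizione di theta generale} show that the horizontal part at an \emph{arbitrary} fibre point $(x,\mathbf{f})$ equals $\frac{i}{2\pi}\sum Q_{\alpha\beta}(v,\bar v)\,\Theta_{\beta\alpha}(x)$, where $v$ denotes the unitary coordinates of the flag and the $Q_{\alpha\beta}$ are universal polynomials independent of the chosen unitary representative (Formula~\eqref{eq:horizontal part is a polynomial universal}). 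It is this explicit, flag-dependent but universal, formula for the horizontal part that makes the fibre integrals well defined and shows that their values are universal constants computable on the model $\FF_{\rho}(\C^r)$. Without establishing this (or something equivalent) your key claim that the $(\pi_\rho)_*\nu_i$ ``depend only on $\rho$ and $\nu_i$'' is unjustified.
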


\begin{remark}
	In order to prove the proposition we follow the steps in the proof of \cite[Proposition~{3.1}]{DF20} coupled with Theorem~\ref{thm: curvature of natural bundles} and the results obtained in Section~\ref{sect: intrinsic curvature vector bundle}.
	However, unlike \cite[Proposition~{3.1}]{DF20} where explicit computations are given, here we provide general remarks on the key concepts that allow us to prove the theorem, since, in the end, the computations are similar and explicit formul{\ae} are not strictly necessary.
\end{remark}

\begin{proof}[Proof of Proposition~\ref{lem: pol homogeneous general}]
	In the notation of Section~\ref{sect: intrinsic curvature vector bundle}, fix for a moment two indices $ 0 \le \ell < l \le m $ which identify a universal vector bundle $ \mathcal{E} $ over $ \FF_{\rho}(E) $, equipped with the Hermitian metric $ H $ induced by $ h $.
	
	By Formula~\eqref{eq: curvature of universal is sum of two tensors} we know that the curvature of $ (\mathcal{E},H) $ can be written as the sum of a horizontal tensor plus a vertical one (in the sense of Section~\ref{sect: intrinsic curvature vector bundle}).
	We denote them by $ \theta $ and $ \omega $ respectively, by ignoring the indices.
	
	For $ j = 1,\dots,\operatorname{rk} \mathcal{E} $, recall the well-known equality from linear algebra
	\begin{align*}
		c_{j}(\mathcal{E},H) &= \operatorname{tr}_{\operatorname{End}(\Lambda^{j}\mathcal{E})} \left( {\bigwedge}^{j} \frac{i}{2\pi} \Theta(\mathcal{E},H) \right) \\
		&= \frac{1}{j !} \det \begin{pmatrix}
			\operatorname{tr}(\theta + \omega) & j - 1 & 0 & \cdots &  \\
			\operatorname{tr} (\theta + \omega)^2 & \operatorname{tr}(\theta + \omega) & j - 2 & \cdots &  \\
			\vdots & \vdots &  & \ddots & \vdots \\
			\operatorname{tr}(\theta + \omega)^{j - 1} & \operatorname{tr}(\theta + \omega)^{j - 2} &  & \cdots & 1 \\
			\operatorname{tr}(\theta + \omega)^{j} & \operatorname{tr}(\theta + \omega)^{j - 1} &  & \cdots & \operatorname{tr}(\theta + \omega)
		\end{pmatrix}
	\end{align*}
	where we have used Formula~\eqref{eq: curvature of universal is sum of two tensors}.
	
	Now, fix a point $ x \in X $ and let $ \mathbf{f} $ be any flag in the fiber $ \FF_{\rho}(E_x) $ given by a unitary basis $ (v_1,\dots,v_r) $ of $ E_x $.
	Fixed a local normal frame $ (e_1,\dots,e_r) $ for $ E $ centered at $ x \in X $, we write $ v_{\lambda} = \sum_{\nu} v_{\lambda}^{\nu} \,e_{\nu}(x) $, where the coordinates $ v_{\lambda}^{\nu} $'s are as in Lemma~\ref{lem:blocksgeneral}.
	Moreover, we can write $ \theta $ (resp. $ \omega $) in matrix form with respect to the frame $ \big( e_1(x),\dots,e_r(x) \big) $ as $ (\theta_{pq}) $ (resp. $ \omega_{tu} $), where $ p,q,t,u = 1 , \dots, \operatorname{rk} \mathcal{E} $.
	
	Therefore, it is clear that each entry $ \operatorname{tr}(\theta + \omega)^{j} $ is a polynomial in $ \theta_{pq} $'s and $ \omega_{tu} $'s, whose coefficients depend only on the rank of $ \mathcal{E} $ and of the indices $ 0 \le \ell < l \le m $ defining the bundle $ \mathcal{E} $.
	Consequently, by the chain of equalities above, $ c_{j}(\mathcal{E},H) $ is a polynomial of type $ P^{j}(\theta_{pq}, \omega_{tu}) $ whose coefficients are universal.
	
	Now, Formula~\ref{eq: horizontal curvature explicit} gives us an explicit expression of $ \theta $ for which, w.r.t. the frame $ \big( e_1(x),\dots,e_r(x) \big) $, we get
	\begin{equation}\label{eq:horizontal part is a polynomial universal}
		\theta_{pq}(x,\mathbf{f}) =  \frac{i}{2\pi} \sum_{\alpha,\beta=1}^{r} Q_{\alpha\beta}(v,\bar{v}) \, \Theta_{\beta \alpha}(x) ,
	\end{equation}
	where we have denoted by $ v $ the vector of coordinates $ v_{\lambda}^{\nu} $'s.
	Observe that the polynomials $ Q_{\alpha\beta} $'s have universal coefficients by Formula~\ref{eq: horizontal curvature explicit}.
	
	In order to compute the push-forward, by linearity we can of course suppose that the given polynomial $ F $ is just a monomial.
	Thus, it is sufficient to show that the push-forward through $ \pi_{\rho} $ of
	\begin{equation}\label{eq:monomial to be pushed}
		\bigwedge_{s=1}^{N} \bigwedge_{j_s=1}^{r_s} c_{j_s}(\mathcal{E}_s,H_s)^{\wedge \ell_{j_s}} = \bigwedge_{s=1}^{N} \bigwedge_{j_s=1}^{r_s} {P_{s}^{j_s}(\theta_{pq}^{s}, \omega_{tu}^{s})}^{\wedge \ell_{j_s}}
	\end{equation}
	is a polynomial in the entries $\Theta_{\beta\alpha}$'s of the matrix associated to the curvature $\Theta(E,h)_x$ with respect to the frame $ (e_1,\dots,e_r )$.
	
	We have observed in Formula~\eqref{eq:horizontal part is a polynomial universal} that $ \theta_{pq}^{s} $ has universal polynomials in $ v $ and $ \bar{v} $ as coefficients.
	Therefore, coupling this with the universality of the $ P^j $'s, after performing all the wedge powers and products in the right hand side of Formula~\eqref{eq:monomial to be pushed}, we can rewrite the latter as a polynomial $ A $ in the $ \Theta_{\beta\alpha} $'s and $ \omega_{tu}^{s} $'s.
	Moreover, since all the polynomials involved in the right hand side of Formula~\eqref{eq:monomial to be pushed} are universal, and we are performing wedge powers and products of them, we deduce that the coefficients of $ A $ are universal polynomials in $ v $ and $ \bar{v} $.
	
	We recall that the $\Theta_{\beta\alpha}$'s only depend on the point $x$, while the $v_\lambda^\nu$'s can be seen, by a slight abuse of notation, as variables of integration even if they have to be understood modulo the action of
	\[
	U(r - \rho_{m-1}) \times U(\rho_{m-1} - \rho_{m-2}) \times \dots \times U(\rho_{1}) \ \subset \ U(r)
	\]
	which corresponds to the homogeneous presentation of the flag manifold as
	\[
	{\textstyle U(r)}\Big/_{\textstyle U(\rho_{1}) \times \dots \times U(r - \rho_{m-1})} \ .
	\]
	Moreover, only the $ \omega_{tu}^{s} $'s contain the vertical differentials which can be integrated along the fibers of the submersion $ \pi_{\rho} $.
	
	By applying $ (\pi_{\rho})_{*} $ to Formula~\eqref{eq:monomial to be pushed}, we have expressed
	$$ (\pi_{\rho})_{*} F\bigl(c_{\bullet}(\mathcal{E}_1,H_1),\dots,c_{\bullet}(\mathcal{E}_N,H_N)\bigr) $$
	as a polynomial in the $\Theta_{\beta\alpha}$'s, whose coefficients are integrals over $ \FF_{\rho}(E_x) $ of universal polynomials in the variables $ v_{\lambda}^{\nu} $'s, and the volume forms are products of the $ \omega_{tu}^{s} $'s.
	These integrals can be computed over the flag manifold $ \FF_{\rho}(\C^r) $, by applying a unitary transformation from $ (E_x,h_x) $ onto $ \C^r $ equipped with the standard metric.
	This concludes the proof.
\end{proof}

Now we see how to explicitly compute the push-forward
\begin{equation*}
	(\pi_{\rho})_{*} F\bigl(c_{\bullet}(\mathcal{E}_1,H_1),\dots,c_{\bullet}(\mathcal{E}_N,H_N)\bigr) \in \mathcal{A}^{k,k}(X) .
\end{equation*}
At the cohomology level, we formally evaluate $ F $ in the Chern classes of $ \mathcal{E}_1, \dots, \mathcal{E}_N $.
This gives a class $ F\bigl(c_{\bullet}(\mathcal{E}_1),\dots,c_{\bullet}(\mathcal{E}_N)\bigr) $ in the group $ H^{2(d_\rho + k)}\bigl(\FF_{\rho}(E)\bigr) $, which we write as a polynomial $ \tilde{F} $ in terms of the Chern roots $ \xi_1,\dots,\xi_r $ of $ \pi_{\rho}^{*}{E^{\vee}} $.
Thus, we have an equality
\[
F\bigl(c_{\bullet}(\mathcal{E}_1),\dots,c_{\bullet}(\mathcal{E}_N)\bigr) = \tilde{F}(\xi_{1},\dots,\xi_{r}) .
\]
By the known universal Gysin formul{\ae} for flag bundles at the level of cohomology, we know that there is a universal weighted homogeneous polynomial $ \Phi $ of degree $ 2k $ such that
\begin{equation}\label{eq: push at the class level, general form}
	(\pi_{\rho})_{*} F\bigl(c_{\bullet}(\mathcal{E}_1),\dots,c_{\bullet}(\mathcal{E}_N)\bigr) = \Phi\bigl(c_1(E),\dots,c_r(E)\bigr) .
\end{equation}

The next result, which is the main theorem of the paper, completely translates Formula~\eqref{eq: push at the class level, general form} at the level of differential forms and generalizes the pointwise push-forward formul{\ae} for flag bundles given in the previous literature.

\begin{theorem}\label{thm: darondeau-pragacz general for differential forms}
	Let $ (E,h) $ be a rank $ r $ Hermitian holomorphic vector bundle over a complex manifold $ X $ of dimension $ n $, and let $ F $ be a complex homogeneous polynomial of degree $ d_{\rho}+k $ in $ r_1 + \dots + r_N $ variables.
	Then, we have the equality
	\begin{equation}\label{eq: Gysin formula universal vector bundles}
		(\pi_{\rho})_{*} F\bigl(c_{\bullet}(\mathcal{E}_1,H_1),\dots,c_{\bullet}(\mathcal{E}_N,H_N)\bigr) = \Phi\big(c_1(E,h),\dots,c_r(E,h)\big) .
	\end{equation}
\end{theorem}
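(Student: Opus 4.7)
My plan is to deduce Theorem~\ref{thm: darondeau-pragacz general for differential forms} from Proposition~\ref{lem: pol homogeneous general} coupled with the cohomological identity~\eqref{eq: push at the class level, general form}. By Proposition~\ref{lem: pol homogeneous general} there exists a \emph{universal} weighted homogeneous polynomial $\Psi$ of degree $2k$ in $r$ variables, depending only on $\rho$, $r$ and $F$, such that
\[
(\pi_{\rho})_{*} F\bigl(c_{\bullet}(\mathcal{E}_1,H_1),\dots,c_{\bullet}(\mathcal{E}_N,H_N)\bigr) = \Psi\bigl(c_\bullet(E,h)\bigr)
\]
as $(k,k)$-forms on $X$. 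The theorem then reduces to the polynomial identity $\Psi=\Phi$, after which substituting the Chern forms of $(E,h)$ for the formal variables yields \eqref{eq: Gysin formula universal vector bundles}.

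To establish $\Psi=\Phi$ I would pass to de Rham cohomology. Since each $c_s(E,h)$ represents $c_s(E)$ and push-forward of forms is compatible with push-forward of classes, the previous equality gives
\[
\Psi\bigl(c_\bullet(E)\bigr) = (\pi_{\rho})_{*} F\bigl(c_{\bullet}(\mathcal{E}_1),\dots,c_{\bullet}(\mathcal{E}_N)\bigr) = \Phi\bigl(c_\bullet(E)\bigr)
\]
in $H^{2k}(X)$, the second equality being \eqref{eq: push at the class level, general form}. As $\Psi$ and $\Phi$ are universal polynomials in $r$ variables depending only on the combinatorial data $(\rho,r,F)$, the identity $\Psi=\Phi$ can be checked on a single convenient example in which the Chern classes $c_1(E),\dots,c_r(E)$ are algebraically independent up to the required degree. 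A standard choice is $X=(\PP^{N})^{r}$ with $N$ sufficiently large compared with $k$, and $E=\bigoplus_{s=1}^{r}\operatorname{pr}_{s}^{*}\mathcal{O}_{\PP^{N}}(1)$ equipped with the orthogonal sum of the Fubini--Study metrics; the Chern roots of $E$ are then the pulled-back hyperplane classes, so the elementary symmetric functions in these roots, \textsl{i.e.}\ the $c_s(E)$'s, are algebraically independent in $H^{*}(X,\Q)$ in low enough degree. On such a bundle the cohomological equality above forces the polynomial identity $\Psi=\Phi$, and the theorem follows.

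The genuinely hard part has in fact been absorbed upstream. Theorem~\ref{thm: curvature of natural bundles}, together with the intrinsic horizontal/vertical decomposition of Section~\ref{sect: intrinsic curvature vector bundle}, is precisely what allows Proposition~\ref{lem: pol homogeneous general} to write the horizontal part of $\Theta(\mathcal{E}_s,H_s)$ at a point as a polynomial in the matrix entries $\Theta_{\beta\alpha}$ of $\Theta(E,h)$, with coefficients that depend only on the flag data, while confining the vertical differentials integrated along the fibers of $\pi_{\rho}$ to the vertical summand. Once this universality is in place, the passage from the cohomological Gysin formula to its pointwise refinement on forms is formal, via the Chern-class-independence argument sketched above; accordingly, the real obstacle is not the final argument but the curvature computation and the universality statement on which it rests.
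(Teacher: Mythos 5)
Your proposal is correct and follows essentially the same route as the paper: Proposition~\ref{lem: pol homogeneous general} supplies the universal polynomial $\Psi$, and the identity $\Psi=\Phi$ is then forced by comparing with the cohomological Gysin formula~\eqref{eq: push at the class level, general form} on a split test bundle. The only (immaterial) difference is the choice of test example --- the paper evaluates the universal discrepancy on $A^{\otimes m_1}\oplus\dots\oplus A^{\otimes m_r}$ over a projective $n$-manifold and varies the exponents $m_i$, whereas you use $\bigoplus_s \operatorname{pr}_s^{*}\mathcal{O}_{\PP^{N}}(1)$ over $(\PP^{N})^{r}$; both exploit the same universality to reduce to algebraically independent Chern data.
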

\begin{proof}
	The difference
	\[
	(\pi_{\rho})_{*} F\bigl(c_{\bullet}(\mathcal{E}_1,H_1),\dots,c_{\bullet}(\mathcal{E}_N,H_N)\bigr) - \Phi\big(c_{\bullet}(E,h)\big)
	\]
	is of course an exact global $ (k,k) $-form on $ X $.
	By Proposition~\ref{lem: pol homogeneous general},
	$$ (\pi_{\rho})_{*} F\bigl(c_{\bullet}(\mathcal{E}_1,H_1),\dots,c_{\bullet}(\mathcal{E}_N,H_N)\bigr) $$
	is a universal homogeneous polynomial of weighted degree $ 2k $ in the Chern forms of $ (E,h) $.
	Hence, the previous difference can be written as a complex weighted homogeneous polynomial $ G $ in the Chern forms of $ (E,h) $, whose coefficients depend only upon $ r,n $ and $ F $, not on the vector bundle $ E $.
	
	At this point, we claim that $ G \equiv 0 $ since we can proceed in the same way as \cite[Proposition 3.1]{Gul12} and \cite[Theorem 3.5]{DF20}.
	Indeed, for $ m_1,\dots,m_r $ positive integers, we formally evaluate $ G $ on the Chern forms of the totally split, rank $ r $ vector bundle $ ({A}^{\otimes m_1} \oplus \dots \oplus {A}^{\otimes m_r}, \omega) $, where $ A $ is an ample line bundle on a projective $ n $-manifold and $ \omega $ is the natural metric induced by a metric on $ A $ with positive curvature.
	The claim follows from the universality of the polynomial $ G $. 
\end{proof}

\section{Applications}
This section is devoted to two applications of Theorem~\ref{thm: darondeau-pragacz general for differential forms}.
The former is a refinement on the level of differential forms of the Jacobi--Trudi identity and the latter proves the positivity of several differential forms thus giving new evidences towards Griffiths' conjecture on positive characteristic forms.

\subsection{Schur forms as push-forwards}
Here we present an explicit formula that allows us to obtain every Schur form as some push-forward from the complete flag bundle.

First, we recall some notation from \cite{DP17,Fag20}.
If $ \sigma = (\sigma_1,\dots,\sigma_k) $ is a $ k $-uple of integers, we denote by $ s_{\sigma} $ the \emph{generalized Schur polynomial} defined as $ \det \bigl( s_{\sigma_i + j - i} \bigr)_{1 \le i,j \le k} $, where the $ s_\ell $'s are the Segre polynomials.

Let $ (E,h) $ be a Hermitian holomorphic rank $ r $ vector bundle over a complex $ n $-manifold $ X $.
The polynomial $ s_\sigma $ formally evaluated in the Chern forms of $ (E,h) $ (resp. Chern classes of $ E $) is denoted by $ s_\sigma(E,h) $ (resp. $ s_\sigma(E) $) and is called the \emph{generalized Schur form} (resp. \emph{class}) associated to $ \sigma $.
This terminology is inspired by the identity~\eqref{eq:relation schur vs generalized schur} below.

Now, we point out how to obtain an explicit expression for the right hand side of Formula~\eqref{eq: Gysin formula universal vector bundles}.
Taken a homogeneous polynomial $F$, in the notation of Theorem~\ref{thm: darondeau-pragacz general for differential forms}, we recall that there exists a polynomial in the virtual Chern roots of $ \pi_{\rho}^{*}E^{\vee} $ called
$$
\tilde{F}(\xi_1,\dots,\xi_r) = \sum_{|\lambda| = d_{\rho}+k} b_{\lambda} \xi_{1}^{\lambda_1} \cdots \xi_{r}^{\lambda_r} ,
$$
which has the appropriate symmetries which ensure that
\begin{equation*}
	\tilde{F}(\xi_1,\dots,\xi_r) = F\bigl(c_{\bullet}(\mathcal{E}_1),\dots,c_{\bullet}(\mathcal{E}_N)\bigr) \in H^{2(d_{\rho} + k)}\bigl(\FF_{\rho}(E)\bigr) .
\end{equation*}
By \cite[Proposition 1.13]{Fag20} we have that
\begin{equation}\label{eq: darondeau-pragacz classes schur}
	(\pi_{\rho})_{*} \tilde{F}(\xi_1,\dots,\xi_r) = \sum_{|\lambda| = d_{\rho}+k} b_{\lambda} s_{(\lambda-\nu)^{\leftarrow}}(E)
\end{equation}
where $ \nu $ is the non-decreasing sequence of integers determined by $ \rho $ as:
\begin{equation}\label{eq: condition on index generalized schur class}
	\nu_i = r - \rho_{\ell} \quad \text{for } r - \rho_{\ell} < i \le r - \rho_{\ell-1} ,
\end{equation}
the notation $ (\sigma_1,\dots,\sigma_r)^{\leftarrow} $ stands for $ (\sigma_r,\dots,\sigma_1) $ and the difference of $ \lambda - \nu $ is defined componentwise.

Due to Theorem~\ref{thm: darondeau-pragacz general for differential forms}, we can now extend \cite[Formula~{(6)}]{Fag20} to any possible
\[
F\bigl(c_{\bullet}(\mathcal{E}_1,H_1),\dots,c_{\bullet}(\mathcal{E}_N,H_N)\bigr)
\]
not only those of the form
\[
F\bigl(\dots,c_1(U_{\rho,\ell+1}/U_{\rho,\ell},h_{\rho,(\ell+1,\ell)}),\dots\bigr).
\]
Therefore, by Formula~\ref{eq: darondeau-pragacz classes schur} and Theorem~\ref{thm: darondeau-pragacz general for differential forms} it follows the identity
\begin{equation}\label{eq: darondeau-pragacz forms schur}
	(\pi_{\rho})_{*} F\bigl(c_{\bullet}(\mathcal{E}_1,H_1),\dots,c_{\bullet}(\mathcal{E}_N,H_N)\bigr) = \sum_{|\lambda| = d_\rho + k} b_{\lambda} s_{(\lambda-\nu)^{\leftarrow}}(E,h) ,
\end{equation}
where $ \nu $ and the $ b_{\lambda} $'s are as descibed above.
Obviously, the right hand side of Formula~\eqref{eq: darondeau-pragacz forms schur} equals the form $ \Phi\big(c_{\bullet}(E,h)\big) $ given by Theorem~\ref{thm: darondeau-pragacz general for differential forms}.

\medskip
Let $ \sigma $ be a partition  in $ \Lambda(k,r) $.
From the Jacobi--Trudi identities for Schur polynomials we know that
\begin{equation}\label{eq:relation schur vs generalized schur}
	S_{\sigma}(E,h) = (-1)^{|\sigma|} s_{\sigma^{\prime}}(E,h)
\end{equation}
where $ \sigma^{\prime} $ is the conjugate partition of $ \sigma $, obtained through the transposition of the Young diagram of $ \sigma $.

\begin{remark}\label{rem: change sequences of schur}
	Observe that we can canonically associate to the conjugate partition $ \sigma^{\prime} \in \Lambda(k,r) \subset \N^k $ an element $ \tilde{\sigma} = (\tilde{\sigma}_1,\dots,\tilde{\sigma}_r)$ of $ \N^r $ as follows.
	If
	\begin{itemize}
		\item $ k < r $, then we set $ \tilde{\sigma} = (\sigma_1^{\prime},\dots,\sigma_{k}^{\prime},\sigma_{k + 1}^{\prime}=0,\dots,\sigma_{r}^{\prime}=0) $;
		\item $ k = r $, then $ \tilde{\sigma} $ is $ \sigma^{\prime} $;
		\item $ k > r $, then, since $ r \ge \sigma_1 \ge \dots \ge \sigma_{k} \ge 0 $, by definition $ \sigma^{\prime} $ must be of the form $ (\sigma_1^{\prime},\dots,\sigma_{r}^{\prime},0,\dots,0) $.
		Therefore we set $ \tilde{\sigma} = (\sigma_1^{\prime},\dots,\sigma_{r}^{\prime}) $.
	\end{itemize}
	In all of the three cases above, it follows from the definition that $ s_{\tilde{\sigma}}(E,h) = s_{\sigma^{\prime}}(E,h) $.
\end{remark}
The next result shows that the Schur form $ S_{\sigma}(E,h) $ can be obtained as a push-forward from the complete flag bundle associated to $ E $.
We follow the notation introduced in Remark~\ref{rem: change sequences of schur}.
\begin{proposition}\label{prop: schur forms as push-forwards}
	Let $ \sigma $ be a partition in $ \Lambda(k,r) $ and let $ \lambda \in \Z^{r} $ be the sequence whose $ j $-th element is $ \lambda_j = \tilde{\sigma}_{r-j+1} + j-1 $.
	Then
	\begin{equation*}\label{eq: push to obtain the auxiliary schur form}
		\pi_{*} \big[ (-1)^{|\lambda|+|\sigma|} \Xi_1^{\wedge \tilde{\sigma}_r} \wedge \dots \wedge \Xi_r^{\wedge (\tilde{\sigma}_{1} + r - 1)} \big] = S_{\sigma}(E,h)
	\end{equation*}
	where $ \pi $ is the natural projection of the complete flag bundle $ \FF(E) $, and the $ \Xi_j $'s are the first Chern forms of its tautological successive quotients equipped with the induced metrics.
\end{proposition}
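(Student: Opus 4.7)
The plan is to apply Theorem~\ref{thm: darondeau-pragacz general for differential forms} to the complete flag bundle $\pi \colon \FF(E) \to X$, which corresponds to the dimension sequence $\rho = (0,1,\ldots,r)$, and then to match the resulting generalized Schur form with $S_\sigma(E,h)$ through the Jacobi--Trudi identity~\eqref{eq:relation schur vs generalized schur}. The key observation that drives the computation is that each $\Xi_j$, being the first Chern form of the line bundle $U_j/U_{j-1}$ with the induced metric, is a chosen representative of the opposite of the $j$-th virtual Chern root of $\pi^{*}E^{\vee}$; in other words, $\Xi_j = -\xi_j$ as $(1,1)$-forms on $\FF(E)$.

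First I would specialise Formula~\eqref{eq: darondeau-pragacz forms schur} to the complete flag bundle. Here the relative dimension equals $d_\rho = \binom{r}{2}$, and condition~\eqref{eq: condition on index generalized schur class} applied with $\rho_\ell = \ell$ yields $\nu_j = j-1$ for every $j = 1,\ldots,r$. Then I would apply the resulting pointwise Gysin identity to the single monomial $\tilde{F}(\xi_1,\ldots,\xi_r) = \xi_1^{\lambda_1}\cdots \xi_r^{\lambda_r}$ with $\lambda_j = \tilde{\sigma}_{r-j+1} + j - 1$, after checking that $|\lambda| = |\sigma| + \binom{r}{2} = d_\rho + |\sigma|$ (so the degree hypothesis of the theorem is met) and that
\[
(\lambda - \nu)^{\leftarrow} = (\tilde{\sigma}_1,\ldots,\tilde{\sigma}_r) = \tilde{\sigma}.
\]
Remark~\ref{rem: change sequences of schur} then gives $\pi_{*}(\xi_1^{\lambda_1}\cdots\xi_r^{\lambda_r}) = s_{\tilde{\sigma}}(E,h) = s_{\sigma'}(E,h)$.

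Finally I would convert back from the $\xi_j$'s to the $\Xi_j$'s via the identity $\Xi_j = -\xi_j$, which produces an overall factor $(-1)^{|\lambda|}$, and then invoke the Jacobi--Trudi identity $S_\sigma(E,h) = (-1)^{|\sigma|} s_{\sigma'}(E,h)$ recalled in~\eqref{eq:relation schur vs generalized schur}. Multiplying the two sign contributions reproduces exactly the prefactor $(-1)^{|\lambda|+|\sigma|}$ appearing in the statement, closing the argument.

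The only genuinely delicate step I anticipate is the bookkeeping of signs, together with the explicit verification that $\Xi_j$ represents $-\xi_j$ (and not $+\xi_j$) and that the indexing in the sequence $\lambda_j = \tilde{\sigma}_{r-j+1} + j - 1$ is compatible with the reversal $(\,\cdot\,)^{\leftarrow}$ appearing in Formula~\eqref{eq: darondeau-pragacz forms schur}. Apart from this, the proposition is essentially a pointwise lift, enabled by Theorem~\ref{thm: darondeau-pragacz general for differential forms}, of the classical cohomological realisation of Schur classes as push-forwards from the complete flag bundle.
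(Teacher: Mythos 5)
Your proposal is correct and follows essentially the same route as the paper: define $\lambda$ so that $(\lambda-\nu)^{\leftarrow}=\tilde\sigma$ with $\nu=(0,1,\dots,r-1)$, apply Formula~\eqref{eq: darondeau-pragacz forms schur} to the monomial $\xi_1^{\lambda_1}\cdots\xi_r^{\lambda_r}$, and then convert signs via $\xi_j=-\Xi_j$ together with the Jacobi--Trudi identity~\eqref{eq:relation schur vs generalized schur}. The only detail to correct is the labeling of the quotients, which is precisely the indexing compatibility you flagged: in the paper's convention (made explicit in the remark following the proposition) one has $\Xi_\ell = c_1(U_{r-\ell+1}/U_{r-\ell})$, not $c_1(U_\ell/U_{\ell-1})$.
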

\begin{proof}
	The sequence $ \lambda $ is defined in such a way that $ \tilde{\sigma} = (\lambda - \nu)^{\leftarrow} $, where $ \nu = (0,1,\dots,r-1)$ is as in Formula~\eqref{eq: condition on index generalized schur class}.
	Therefore, by Formula~\eqref{eq: darondeau-pragacz forms schur} it follows that
	\[
	\pi_{*} \bigl[ (-\Xi_1)^{\lambda_1} \wedge \dots \wedge (-\Xi_r)^{\lambda_r} \bigr] = s_{(\lambda - \nu)^{\leftarrow}}(E,h) = (-1)^{|\sigma|} S_{\sigma}(E,h)
	\]
	where the last equality follows from Formula~\eqref{eq:relation schur vs generalized schur} and Remark~\ref{rem: change sequences of schur}.
\end{proof}

\begin{remark}
	Observe that the Chern root $ \xi_\ell = - c_1(U_{r-\ell+1}/U_{r-\ell}) $ of the complete flag bundle is not virtual and is represented by the form $ -\Xi_\ell $.
\end{remark}

\begin{remark}
	Proposition~\ref{prop: schur forms as push-forwards} is an alternative expression of the Jacobi--Trudi identity for differential forms given in \cite[Theorem~{A.2}]{Fin20}.
	Therefore, these two results are both a special case of Theorem~\ref{thm: darondeau-pragacz general for differential forms}.
	Note also that to obtain Proposition~\ref{prop: schur forms as push-forwards}, Theorem~\ref{thm: darondeau-pragacz general for differential forms} in its generality is not necessary, but it is sufficient to use \cite[Theorem~{3.5}]{DF20}.
\end{remark}

\subsection{Positive characteristic forms for Griffiths positive vector bundles}
In this section we use Theorem~\ref{thm: darondeau-pragacz general for differential forms} to prove the positivity of a family of differential forms and their belonging to the Schur cone of a Griffiths semipositive vector bundle.
Therefore, in the same spirit of \cite{Gul12,DF20}, we give new evidence to Griffiths' conjecture about the positivity of Chern--Weil forms.

Assume that the Hermitian holomorphic vector bundle $ (E,h) \to X $ is now {Griffiths semipositive}.

For $ s = 1,\dots,r-1 $ let $ \GG_{s}(E) $ be the Grassmann bundle associated to $ E $ and $ \gamma_{s} $ its tautological vector bundle.
By a slight abuse of notation we denote by $ p $ the natural projection from a Grassmann bundle to $ X $, ignoring the index $ s $.
Let $ Q_s = p^{*}E / \gamma_{s} $ be the universal quotient bundle of rank $ r-s $ endowed with the quotient metric $ h_s $ induced by $ h $.
Recall that, in our notation,
\begin{itemize}
	\item if $ s = 1 $, then $ \GG_{1}(E) = \PP(E) $ is the projective bundle of lines in $ E $ and $ Q_1 = p^{*}E / \mathcal{O}_{\PP(E)}(-1) $;
	\item if $ s = r-1 $, then $ \GG_{r-1}(E) = \PP(E^{\vee}) $ is the projective bundle of hyperplanes in $ E $ and $ Q_{r-1} = \mathcal{O}_{\PP(E^{\vee})}(1) $.
\end{itemize}

Being a quotient of a Griffiths semipositive vector bundle $ (Q_s,h_s) $ is Griffiths semipositive (see \cite[{(6.10)}~Proposition]{Dem01}).

\begin{theorem}\label{thm: cone of positive forms}
	For each $ \alpha, \beta $ such that
	\begin{equation*}
		0 \le \beta \le 2 \quad \text{and} \quad s(r-s) \le \alpha + 2\beta \le n + s(r-s)
	\end{equation*}
	the closed differential form
	\begin{equation*}
		p_{*} \left[ {c_1(Q_s,h_s)}^{\wedge \alpha} \wedge {c_2(Q_s,h_s)}^{\wedge \beta} \right]
	\end{equation*}
	is a positive form on $ X $ belonging to the positive convex cone spanned by the Schur forms of $ (E,h) $.
	Moreover, the explicit expression of the push-forward can be obtained by applying, for instance, Formula~\eqref{eq: darondeau-pragacz forms schur}.
\end{theorem}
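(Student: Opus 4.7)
The plan is to combine two ingredients: positivity of the integrand on $\GG_{s}(E)$ together with the standard fact that fiber integration along a proper holomorphic submersion carries positive forms to positive forms; and an explicit identification of the pushforward as a non-negative combination of Schur forms of $(E, h)$ via Theorem~\ref{thm: darondeau-pragacz general for differential forms}. The starting observation is that $(Q_s, h_s)$ is Griffiths semipositive, being a Hermitian quotient of $(p^{*}E, p^{*}h)$, which is itself Griffiths semipositive because $(E,h)$ is. Consequently $c_1(Q_s, h_s)$ is a semipositive $(1,1)$-form, and hence strongly positive, while $c_2(Q_s, h_s)$ is a positive $(2,2)$-form by [Theorem~2.2, Fag20].

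For the positivity of the integrand $c_1(Q_s, h_s)^{\wedge \alpha} \wedge c_2(Q_s, h_s)^{\wedge \beta}$, the wedge power $c_1(Q_s, h_s)^{\wedge \alpha}$ is strongly positive, so in the cases $\beta \in \{0,1\}$ the integrand is a wedge of a strongly positive with a positive form, hence positive. For $\beta = 2$ one has to argue additionally that $c_2(Q_s, h_s)^{\wedge 2}$ is positive; I would address this by exploiting additional structure coming from Fagioli's proof of [Theorem~2.2, Fag20] (for instance a strongly positive refinement of $c_2$ whose square remains positive), or by a direct pointwise computation using Theorem~\ref{thm: curvature of natural bundles} applied to the universal quotient $Q_s$ on $\GG_s(E)$. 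Once this is established, the dimensional constraints $s(r-s) \le \alpha + 2\beta \le n + s(r-s)$ guarantee that $p_{*}[\,\cdot\,]$ has bidegree $(k, k)$ with $0 \le k \le n$, and pushforward under the proper holomorphic submersion $p$ preserves positivity.

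Membership in the Schur cone follows from Theorem~\ref{thm: darondeau-pragacz general for differential forms} applied to $F = c_1^{\alpha}\, c_2^{\beta}$ with $\rho = (0, s, r)$. At the cohomology level, writing $c_1(Q_s)$ and $c_2(Q_s)$ as the elementary symmetric polynomials $e_1, e_2$ in the Chern roots $\xi_1, \dots, \xi_{r-s}$ of $Q_s^{\vee}$ and expanding the product $e_1^{\alpha} e_2^{\beta}$, one obtains monomials in $\xi_1, \dots, \xi_{r-s}$ with non-negative (multinomial) coefficients. Applying Formula~\eqref{eq: darondeau-pragacz forms schur} termwise then yields
\[
p_{*} \bigl[ c_1(Q_s, h_s)^{\wedge \alpha} \wedge c_2(Q_s, h_s)^{\wedge \beta} \bigr] = \sum_{\lambda} b_\lambda \, s_{(\lambda - \nu)^{\leftarrow}}(E, h),
\]
with non-negative $b_\lambda$'s and with $\nu$ as in~\eqref{eq: condition on index generalized schur class}; converting these generalized Schur forms into honest Schur forms via~\eqref{eq:relation schur vs generalized schur} and Remark~\ref{rem: change sequences of schur}, and bookkeeping the signs, places the pushforward in the positive convex cone spanned by the Schur forms of $(E, h)$.

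The main obstacle is the positivity of $c_2(Q_s, h_s)^{\wedge 2}$ in the case $\beta = 2$, where the wedge square of a $(2,2)$-form can a priori fail to be positive and strong positivity is in any case lost, as the author explicitly cautions in the surrounding discussion. A secondary point of care is the combinatorial check that after converting generalized Schur forms to genuine Schur forms the resulting coefficients are non-negative throughout the full range of $(\alpha, \beta)$ allowed by the hypotheses.
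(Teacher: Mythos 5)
Your proposal leaves two genuine gaps, one in each half of the argument. First, the positivity of the integrand when $\beta=2$: you correctly identify that ${c_2(Q_s,h_s)}^{\wedge 2}$ is the problematic factor (the wedge product of two positive forms is not positive in general), but you do not resolve it, offering only a hope that Fagioli's proof of the positivity of $c_2$ yields a strongly positive refinement, or that a direct curvature computation would work. The paper closes this by citing \cite[Theorem~1]{BP13}, which asserts precisely that the square of a positive $(2,2)$-form is positive; this is exactly why the hypothesis $0\le\beta\le 2$ appears, and without this input (or an equivalent substitute) the case $\beta=2$ is not established. Once that is in hand, the rest of your first half is the paper's argument: $c_1^{\wedge\alpha}$ is strongly positive, wedging a strongly positive form with a positive form is positive, and the push-forward along the proper holomorphic submersion $p$ preserves positivity.

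Second, and more seriously, your argument for membership in the Schur cone does not work as stated. Expanding $e_1^{\alpha}e_2^{\beta}$ in the Chern roots indeed gives non-negative multinomial coefficients $b_\lambda$ on the monomials, but Formula~\eqref{eq: darondeau-pragacz forms schur} then produces \emph{generalized} Schur forms $s_{(\lambda-\nu)^{\leftarrow}}(E,h)$ indexed by sequences that are in general not partitions. Straightening such a determinant into a genuine Schur form introduces signs (or makes the term vanish), and there is no a priori reason that after this straightening the coefficients on the honest Schur forms remain non-negative; the ``bookkeeping the signs'' you defer is the entire difficulty, not a secondary check. The paper avoids this combinatorial problem altogether: by Theorem~\ref{thm: darondeau-pragacz general for differential forms} the push-forward equals a \emph{universal} polynomial $P$ in the Chern forms, positive pointwise for every Griffiths semipositive bundle of rank $r$; if $P$ were not a non-negative combination of Schur polynomials, then by \cite[Proposition~{3.4}]{FL83} there would exist a dual Nakano positive bundle on a projective $n$-manifold on which $\int P(c_\bullet)<0$, contradicting the non-negativity of the integral of a positive form. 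You should either adopt this duality argument or supply the missing combinatorial verification, which is not routine.
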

\begin{proof}
	To simplify the notation, set $ c_1 := c_1(Q_s,h_s) $ and $ c_2 := c_2(Q_s,h_s) $.
	First, recall that the positivity of $ c_1 $ follows from the definition of Griffiths semipositivity.
	Regarding the positivity of $ c_2 $ we refer to \cite[Appendix to~\S 5.(b)]{Gri69}, \cite[6]{Gul06} and \cite[Theorem~{2.2}]{Fag20}.
	Observe that the condition $ 0 \le \beta \le 2 $ is needed as the positivity of $ c_2^{\wedge \beta} $ is not known if $ \beta > 2 $, while the positivity of $ c_2^{\wedge 2} $ is the content of \cite[Theorem~1]{BP13}.
	
	Call $ P $ the polynomial obtained by applying Theorem~\ref{thm: darondeau-pragacz general for differential forms} to $ p_{*} ( c_1^{\wedge \alpha} \wedge  c_2^{\wedge \beta} )$ and recall that the coefficients of $ P $ do not depend on the vector bundle $ (E,h) $.
	They depend only on $ n, r $ and of course the shape of the monomial $ x^{\alpha}  y^{\beta} $.
	
	Also, we point out that the polynomial $ P $ must be \emph{pointwise positive for Griffiths semipositive vector bundles}.
	This means that for each Griffiths semipositive vector bundle $ (\mathcal{V},h_{\mathcal{V}}) $ of rank $ r $ over a complex manifold $ Z $, the form $ P\big( c_{\bullet}(\mathcal{V},h_{\mathcal{V}}) \big) $ is positive as a differential form.
	This follows from Theorem~\ref{thm: darondeau-pragacz general for differential forms} as $ P\big( c_{\bullet}(\mathcal{V},h_{\mathcal{V}}) \big) $ is obtained, by definition, as the push-forward of the positive form
	\[
	{c_1(\mathcal{Q}_s,h_{\mathcal{Q}_s})}^{\wedge \alpha} \wedge {c_2(\mathcal{Q}_s,h_{\mathcal{Q}_s})}^{\wedge \beta} ,
	\]
	where $ \mathcal{Q}_s $ is the universal quotient bundle over the Grassmann bundle $ \GG_{s}(\mathcal{V}) $, and $ h_{\mathcal{Q}_s} $ is the quotient metric induced by $ h_{\mathcal{V}} $.
	
	Moreover, we recall that $ P $ must be a non-negative combination of Schur polynomials.
	This follows from \cite[Proposition~{3.4}]{FL83} where it is shown that if $ P $ is not in the cone spanned by Schur polynomials then there exists a smooth projective $ n $-manifold and a dual Nakano positive vector bundle of rank $ r $ over it, such that the integral over the manifold of $ P $ evaluated in the Chern classes of the bundle gives a negative number (we refer to \cite[Appendix~A]{FL83}, \cite[Remark~{4.4}]{DF20}).
	This contradicts the fact that $ P $ is pointwise positive for Griffiths semipositive vector bundles, as the integral of a positive form gives a non-negative number.
\end{proof}

\begin{remark}
	By Theorem~\ref{thm: cone of positive forms}, Griffiths' conjecture on positive characteristic forms is thus confirmed for the polynomials in the Chern forms of $ (E,h) $ which are in the positive convex cone $ \mathcal{G}(E,h) $ spanned by all possible push-forwards as in Theorem~\ref{thm: cone of positive forms}.
	Observe however that the cone $ \mathcal{G}(E,h) $ is not stable under wedge product as the forms we push-forward are in general not strongly (resp. Hermitian) positive (see \cite[Section~{1.1}]{Fag20}).
\end{remark}

Now, we want to point out that the cone $ \mathcal{G}(E,h) $ and the cone $ \mathcal{F}(E,h) $ obtained in \cite[Theorem 4.8]{DF20} are different, and one is not contained in the other.
For this part, we have used both Formula~\eqref{eq: darondeau-pragacz forms schur} and Darondeau--Pragacz push-forward formula \cite[Proposition~{1.2}]{DP17}.
See also \cite[Formula~{(3.3)}]{DF20}.

If $ E $ has rank $ 3 $ then $ c_2 $ is in the cone $ \mathcal{G}(E,h) $ but not in $ \mathcal{F}(E,h) $.
Indeed, with respect to the Schur basis
\[
\left[ S_{(2,0)} = c_2, \  S_{(1,1)} = c_1^2 - c_2 \right]
\]
the $ (2,2) $-forms belonging to $ \mathcal{F}(E,h) $ are given by positive linear combinations of forms with coordinates
\begin{enumerate}[(i)]
	\item $ [2a(a^3 -3ab^2 +2b^3), 3a^4 -4a^3b + b^4] $ if $ \rho = (0,1,3) $, for each $ a > b \ge 0 $;\label{item:proj}
	\item $ [2b(2a^3 -3a^2b +b^3), a^4 -4ab^3 + 3b^4] $ if $ \rho = (0,2,3) $, for each $ a > b \ge 0 $;\label{item:hyper}
	\item $ [10( a^2b^2(a-b) -a^2c^2(a-c) +b^2c^2(b-c) ), 5( ab(a^3-b^3) -ac(a^3-c^3) +bc(b^3-c^3) )] $ if $ \rho = (0,1,2,3) $, for each $ a > b > c \ge 0 $.\label{item:complete}
\end{enumerate}
By a direct computation, it follows that $ c_2 $ can not be obtained as a positive linear combination of~\eqref{item:proj},~\eqref{item:hyper} and~\eqref{item:complete} with the parameters satisfying those conditions.
Therefore, $ c_2 \notin \mathcal{F}(E,h) $.
On the other hand, $ c_2 \in \mathcal{G}(E,h) $ as it equals the push-forward of $ c_2(Q_1,h_1)^2 $ from the projective bundle $ \PP(E) $.

With a similar argument one can also show that the Schur form $ c_1c_2 - c_3 $ is in $  \mathcal{G}(E,h) $ but not in $ \mathcal{F}(E,h) $.

Conversely, we see that if $ E $ has rank $ 2 $, then we can find forms belonging to $ \mathcal{F}(E,h) $ but not to $ \mathcal{G}(E,h) $.
Indeed, for $ r=2 $ we have just one (proper) flag bundle, namely $ \PP(E) $ and, looking at $ (2,2) $-forms, Theorem~\ref{thm: cone of positive forms} only gives the positivity of the non-negative multiples of $ S_{(1,1)} = c_1^2 - c_2 $.
Instead, the $ (2,2) $-forms in $ \mathcal{F}(E,h) $ consist of positive linear combinations of forms with Schur coordinates
\[
[3ab(a-b), a^3 - b^3], \ \text{for each} \  a > b \ge 0
\]
and by choosing $ b>0 $ we are done.

Finally, we note that in any rank the positive convex cones $ \mathcal{F}(E,h) $ and $ \mathcal{G}(E,h) $ intersect (at least) in the Schur forms $ S_{(1,\dots,1)}(E,h) $'s (which are the signed Segre forms).
This follows from \cite[Section~{4.1.1}]{DF20} and from Theorem~\ref{thm: cone of positive forms} by choosing $ s=r-1 $.

\subsubsection{Examples}
We conclude the paper by giving some concrete examples of differential forms whose positivity is due to Theorem~\ref{thm: cone of positive forms}.
In these examples, we suppose that $ r = n = 4 $.

In order to simplify the notation, we denote by $ c_1,\dots,c_r $ the Chern forms of $ (E,h) $ and by $ s_1,\dots,s_n $ its Segre forms.
We are also allowed to omit the symbol $ \wedge $ of wedge product of forms.

Set $ s=1 $, and consider $ (Q_1,h_1) \to \PP(E) $.
The push-forward of the positive form $ {c_1(Q_1,h_1)}^{2}  {c_2(Q_1,h_1)}^{2} $ gives the differential form
\begin{align*}
c_1^3 + 2c_1c_2 - c_3 &= -2 s_1^3 + s_3 \\
&= 2S_{(3,0,0)} + 4S_{(2,1,0)} + S_{(1,1,1)} ,
\end{align*}
while the push-forward of $ {c_1(Q_1,h_1)}^{3}  {c_2(Q_1,h_1)}^{2} $ gives
\begin{align*}
	c_1^4 + 3c_1^2c_2 - 3c_1c_3 -c_4 &= 6s_1^2s_2 - 5s_1s_3 -s_2^2 +s_4 \\
	&= 6S_{(3,1,0,0)} +5S_{(2,2,0,0)} + 6S_{(2,1,1,0)} +S_{(1,1,1,1)} .
\end{align*}

Set $ s=2 $, and consider $ (Q_2,h_2) \to \GG_{2}(E) $.
The push-forward of the positive form $ {c_1(Q_2,h_2)}^{3}  {c_2(Q_2,h_2)}^{2} $ gives the differential form
\begin{align*}
	c_1^3 - c_3 &= -2 s_1s_2 + s_3 \\
	&= 2S_{(2,1,0)} + S_{(1,1,1)} ,
\end{align*}
while the push-forward of $ {c_1(Q_2,h_2)}^{4}  {c_2(Q_2,h_2)}^{2} $ gives
\begin{align*}
	c_1^4 - 3c_1c_3 + 2c_4 &= s_1s_3 +2s_2^2 -2s_4 \\
	&= 2S_{(2,2,0,0)} + 3S_{(2,1,1,0)} +S_{(1,1,1,1)} .
\end{align*}

Therefore, already in the cases of $ \PP(E) $ and $ \GG_{2}(E) $ we have new examples of positive forms.

\bibliographystyle{amsalpha}
\bibliography{bibliography}{}

\end{document}